\def\E{\mathbb{E}}
\def\R{\mathbb{R}}
\def\P{\mathbb{P}}
\def\Z{\mathbb{Z}}
\def\N{\mathbb{N}}
\def\F{\mathcal{F}}
\def\L{\mathbb{L}}
\newcommand{\LOE}{{\rm{LOE}}}
\newcommand{\LE}{{\rm{LE}}}
\newcommand{\LUE}{{\rm{LUE}}}
\newcommand{\GUE}{{\rm{GUE}}}
\newcommand\eps{{\varepsilon}}
\newcommand{\bet}{\beta}
\newcommand{\lam}{\lambda}
\newcommand{\Del}{\Delta}
\newcommand{\iid}{\text{i.i.d. }}
\renewcommand{\l}{\left}
\renewcommand{\r}{\right}
\newcommand{\beit}{\begin{itemize}}
\newcommand{\eeit}{\end{itemize}}
\newcommand{\bprf}{\begin{proof}}
\newcommand{\eprf}{\end{proof}}
\newcommand{\berk}{\begin{remark}}
\newcommand{\eerk}{\end{remark}}
\newcommand{\mb}{\mbox}
\newcommand{\benu}{\begin{enumerate}\setlength\itemsep{4pt}}
\newcommand{\eenu}{\end{enumerate}\setlength\itemsep{4pt}}
\newtheorem{theorem}{Theorem}[section]
\newtheorem{lemma}[theorem]{Lemma}
\newtheorem{proposition}[theorem]{Proposition}
\newtheorem{remark}[theorem]{Remark}
\newtheorem{definition}[theorem]{Definition}
\newtheorem{maintheorem}{Theorem}
\begin{document}
\title[Lower Deviations in $\beta$-ensembles and Law of Iterated Logarithm in LPP]{Lower Deviations in $\beta$-ensembles and Law of Iterated Logarithm in Last Passage Percolation}
\author[R. Basu]{Riddhipratim Basu}
\address{Riddhipratim Basu, International Centre for Theoretical Sciences, Tata Institute of Fundamental Research, Bangalore, India}
\email{rbasu@icts.res.in}

\author[S. Ganguly]{Shirshendu Ganguly}
\address{Shirshendu Ganguly, Department of Statistics, UC Berkeley, CA, USA}
\email{sganguly@berkeley.edu}

\author[M. Hegde]{Milind Hegde}
\address{Milind Hegde, Department of Mathematics, UC Berkeley, CA, USA}
\email{mhegde@math.berkeley.edu}

\author[M. Krishnapur]{Manjunath Krishnapur}
\address{Manjunath Krishnapur, Department of Mathematics, Indian Institute of Science, Bangalore, India.}
\email{manju@iisc.ac.in}

\date{}

\begin{abstract}
For the last passage percolation (LPP) on $\Z^2$ with exponential passage times, let $T_{n}$ denote the passage time from $(1,1)$ to $(n,n)$. We investigate the law of iterated logarithm of the sequence $\{T_{n}\}_{n\geq 1}$; we show that 
$\liminf_{n\to \infty} \frac{T_{n}-4n}{n^{1/3}(\log \log n)^{1/3}}$ almost surely converges to a deterministic negative constant and obtain some estimates on the same. This settles a conjecture of Ledoux \cite{ledoux} where a related lower bound and  similar results for the corresponding upper tail were proved. Our proof relies on a slight shift in perspective from point-to-point passage times to considering point-to-line passage times instead, and exploiting the correspondence of the latter to the largest eigenvalue of the Laguerre Orthogonal Ensemble (LOE). A key technical ingredient, which is of independent interest, is a new lower bound of lower tail deviation probability of the largest eigenvalue of $\beta$-Laguerre ensembles, which extends the results proved in the context of the $\beta$-Hermite ensembles by Ledoux and Rider \cite{LR09}.
\end{abstract}

\maketitle
\tableofcontents

\section{Introduction and statement of main results}
Last passage percolation on $\Z^2$, where one puts i.i.d.\ weights on the vertices of $\Z^2$ and studies the maximum weight of an oriented path between two vertices, is a canonical model believed to be in the (1+1)-dimensional Kardar-Parisi-Zhang (KPZ) universality class. A handful of such models, the so-called exactly solvable models, have been rigorously analysed using some remarkable bijections and connections to random matrix theory, leading to an explosion of activities in the field of integrable probability in recent decades. 
We shall consider the exponential LPP model on $\Z^2$ where the field of vertex weights $\{\xi_{v}\}_{v\in \Z^2}$ is a family of i.i.d.\ rate one exponentially distributed random variables. 

\begin{definition}\label{LPPdef}
For any up/right path $\gamma$ in $\Z^2$, define the weight of $\gamma$ as $\ell(\gamma):=\sum_{v\in \gamma} \xi_{v}$, and for $u,v\in \Z^2$, with $u\preceq v$ in the usual partial order, the last passage time $T_{u,v}=T_{v,u}$ from $u$ to $v$ is defined by $T_{u,v}:=\max_{\gamma:u\to v} \ell(\gamma)$ where the maximum is taken over all oriented paths from $u$ to $v$. For $n\geq 1$, we shall denote by $T_{n}$ the passage time from  $\mathbf{1}$ to $\mathbf{n}$ ($\mathbf{r}$ will denote the point $(r,r)$ for $r\in \Z$). 
\end{definition}

Our primary object of interest will be the family of coupled random variables $\{T_{n}\}_{n\geq 1}$. It is a fact, by now classical, \cite{Ro81} that $T_{n} \sim 4n$ and it was shown by Johansson in \cite{Jo99} that  $Z_n:=n^{-1/3}(T_{n}-4n)$ is a tight sequence of random variables and in particular converges to a scalar multiple of the GUE Tracy-Widom distribution from random matrix theory. Indeed, \cite{Jo99} established the remarkable distributional equality:
\begin{equation}\label{distributionequality}T_{n} \overset{d}{=} \lambda_n(\LUE_n)
 \end{equation}
where $\lambda_n(\LUE_n)$ is the largest eigenvalue of the Laguerre Unitary Ensemble (LUE), i.e. the matrix $X^*X$ where $X$ is an $n\times n$ matrix of i.i.d.\ standard complex Gaussian random variables.

Inspired by a result of Paquette and Zeitouni \cite{PZ} (see Section \ref{s:bg} for details), Ledoux \cite{ledoux} considered the law of iterated logarithm for the sequence $\{T_{n}:n\geq 1\}$, and showed that there exist $0<C_1<C_2<\infty$ such that almost surely 
\begin{equation}\label{limsup}
 C_1\leq \limsup_{n\to \infty} \frac{Z_{n}}{(\log \log n)^{2/3}} \leq C_2.
 \end{equation}
Note that the $\limsup$ above and the $\liminf$ below are almost sure constants by a 0-1 law (see  Lemma~\ref{l:01}). For the $\liminf$, it was shown in \cite{ledoux} that 
\begin{equation}\label{liminf}
\liminf _{n\to \infty} \frac{Z_{n}}{(\log \log n)^{1/3}}>-C_3,
\end{equation} 
almost surely for some $C_3<\infty$, and it was conjectured that $(\log \log n)^{1/3}$ is indeed the right scale of fluctuation for the lower deviations. The first main result of this paper completes the picture by establishing this conjecture.

\begin{maintheorem}
\label{t:main}
There exists $C_4>0$ such that, almost surely
\begin{equation}\label{statement1}
\liminf\limits_{n\to \infty} \frac{Z_{n}}{(\log \log n)^{1/3}}= -C_4.
\end{equation}
\end{maintheorem}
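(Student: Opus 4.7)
The liminf in \eqref{statement1} is a deterministic constant by the 0-1 law of Lemma~\ref{l:01}, and by Ledoux's estimate \eqref{liminf} that constant is $\geq -C_3$. It therefore suffices to exhibit $C_4>0$ such that, almost surely, $\{Z_n \leq -C_4(\log\log n)^{1/3}\}$ occurs for infinitely many $n$.

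\textbf{Step 1 (cubic lower tail via $\beta$-Laguerre).} The engine of the argument is the estimate
\[
\P\bigl(T_n \leq 4n - \theta n^{1/3}\bigr) \;\geq\; c\exp\bigl(-c'\theta^3\bigr), \qquad 1\leq \theta \leq \eps n^{2/3},
\]
for absolute constants $c,c',\eps>0$. Rather than working with the identity $T_n\overset{d}{=}\lam_n(\LUE_n)$ directly, I would use the point-to-line passage time $T_n^{\mathrm{pl}} := \max_{k} T_{(1,1),(k,\,2n-k)}$, which satisfies $T_n \leq T_n^{\mathrm{pl}}$ (since $(n,n)$ lies on the antidiagonal) and the distributional identity $T_n^{\mathrm{pl}} \overset{d}{=} \lam_n(\LOE_n)$. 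Consequently, $\P(T_n \leq x) \geq \P(T_n^{\mathrm{pl}} \leq x)$, so it suffices to prove a cubic lower tail for $\lam_n(\LOE_n)$. This in turn is the $\beta=1$ case of a new lower-tail estimate for the top eigenvalue of the $\beta$-Laguerre ensemble, obtained by extending the Ledoux--Rider bound \cite{LR09} from $\beta$-Hermite to $\beta$-Laguerre. The standard route is variational via the Dumitriu--Edelman tridiagonal model: one exhibits a test vector localized on a window of length $\sim \theta n^{1/3}$ whose Rayleigh quotient is below $\mu_n - \theta n^{-1/6}$ on the event that the $\chi$-distributed tridiagonal entries in the window are uniformly small, an event of probability $\gtrsim \exp(-c'\theta^3)$.

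\textbf{Step 2 (decoupling and Borel--Cantelli).} Set $n_k := \lfloor e^k\rfloor$ and $\theta_k := C_4(\log\log n_k)^{1/3}$, so that Step~1 gives $\P(Z_{n_k} \leq -\theta_k) \gtrsim k^{-c'C_4^3}$, which is non-summable for $C_4$ small enough. To apply the second Borel--Cantelli lemma one must decouple the highly dependent events across $k$. Let $L_k$ denote the line-to-line passage time across the antidiagonal strip between $\{x+y=2n_{k-1}\}$ and $\{x+y=2n_k\}$: the $L_k$ depend on disjoint weights and are therefore mutually independent. Since any up/right path from $(1,1)$ to $\{x+y=2n_k\}$ crosses $\{x+y=2n_{k-1}\}$,
\[
T_{n_k} \;\leq\; T_{n_k}^{\mathrm{pl}} \;\leq\; T_{n_{k-1}}^{\mathrm{pl}} + L_k.
\]
Combined with the standard upper-tail concentration $\P(T_{n_{k-1}}^{\mathrm{pl}} \geq 4n_{k-1} + s\, n_{k-1}^{1/3}) \leq \exp(-c\,s^{3/2})$, which pins $T_{n_{k-1}}^{\mathrm{pl}}$ near $4n_{k-1}$ on an event of probability $1-o(1)$, the independent events $A_k := \{L_k \leq 4(n_k - n_{k-1}) - 2\theta_k n_k^{1/3}\}$ imply $\{Z_{n_k} \leq -\theta_k\}$ on an overwhelming-probability background. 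A variant of Step~1 applied to the line-to-line quantity $L_k$ (of effective scale $\sim n_k$) gives $\P(A_k) \gtrsim k^{-c'C_4^3}$, and second Borel--Cantelli finishes the argument.

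\textbf{Main obstacle.} The principal difficulty is Step~1: establishing the sharp cubic $\beta$-Laguerre lower tail. While the Ledoux--Rider framework provides the template, the Laguerre tridiagonal structure (with its $\chi$-variables and different normalizing constants) requires a new variational construction delicate enough to obtain the $\theta^3$ rate rather than $\theta^{3/2}$ or $\theta^2$. A secondary technical issue is to promote Step~1 to the line-to-line passage time $L_k$ appearing in Step~2; this is expected either via a distributional identification of $L_k$ with a $\beta=1$ Laguerre top eigenvalue of matching dimension, or via a direct comparison with the point-to-line setting using the translation invariance of exponential LPP.
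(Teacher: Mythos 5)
Your reduction to the point-to-line cubic lower tail is the same key ingredient the paper uses (Theorem~\ref{t:p2l} via Proposition~\ref{p:loe} and Theorem~\ref{t:main2}), and the dyadic decomposition into antidiagonal strips is also in the right spirit. However, both steps contain substantive errors.

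\emph{Step 1.} The test-vector sketch is backwards. Lower bounding $\P(\lambda_n \leq \mu_n - \theta\sigma_n)$ requires producing an event on which \emph{every} Rayleigh quotient is small, not exhibiting one vector with a small Rayleigh quotient (which would bound the \emph{smallest} eigenvalue, or, dually, lower bound the \emph{upper} tail). The paper's proof of Theorem~\ref{t:main2} is an exponential change of measure: one tilts the $\chi$ entries $X_k\mapsto Y_k$ so that under the tilted law the whole quadratic form $Q_b$ is almost surely negative enough (using Lemma~\ref{lem:quadformcomparison} to dominate $Q$ by $Q_b$ and Lemma~\ref{lem:probboundsonQ1} for the tilted upper tail), and then bounds the Radon--Nikodym derivative of the original law with respect to the tilted one. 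The linearization $T=\begin{bmatrix}0&B^T\\B&0\end{bmatrix}$ is what makes the entries independent so that the tilting can be done entry by entry; this is exactly the obstruction Ledoux--Rider flagged in the bidiagonal Laguerre model. (Also, the correct identity is $T_n^*\overset{d}{=}\tfrac12\lambda_{2n-1}(\LOE_{2n-1})$, not $\lambda_n(\LOE_n)$, though this only affects constants.)

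\emph{Step 2.} The Borel--Cantelli~II scheme as you set it up does not close, for two reasons. First, a scale mismatch: with $n_k=\lfloor e^k\rfloor$ the ratio $n_k/n_{k-1}$ is bounded, so to pin $T^{\mathrm{pl}}_{n_{k-1}}$ near $4n_{k-1}$ simultaneously for all large $k$ (as needed, since you do not control which $k$ the i.o.\ events $A_k$ select) you must take $s_k$ with $\sum_k e^{-cs_k^{3/2}}<\infty$, forcing $s_k\gtrsim(\log k)^{2/3}$. This error term $s_k n_{k-1}^{1/3}$ then exceeds the claimed gain $\theta_k n_k^{1/3}\sim(\log k)^{1/3}n_k^{1/3}$, so $A_k$ no longer implies $\{Z_{n_k}\le-\theta_k\}$. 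Second, the line-to-line passage time $L_k$ is strictly larger than any single line-to-point passage time across the strip, so $\P(L_k\le\cdot)$ is \emph{smaller} than the corresponding line-to-point tail; it has no clean $\beta$-Laguerre representation and there is no reason the optimal cubic exponent persists for it. The paper sidesteps both problems: it decomposes the \emph{point-to-point} quantity $T_{n_j-1}$ as $\widetilde T^*_{n_{j-1}}$ (point-to-line, with the shared vertex removed) plus $T^*_{(j)}$ (line-to-\emph{point}, equal in distribution to $T^*_{n_{j-1}}$ by reflection symmetry since $2n_{j-1}=n_j$), then uses a stopping time $\tau:=\max\{j\le k: A_j\}$. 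Because $\{\tau=j\}$ is measurable with respect to the weights on or above $\L_{n_{j-1}}$ while $B_j:=\{\widetilde T^*_{n_{j-1}}<4n_{j-1}\}$ is measurable with respect to the weights strictly below it, one has $\P(B_\tau\mid\tau=j)=\P(B_j)\ge 2\delta$, a \emph{constant}-probability median bound rather than an overwhelming-probability upper-tail bound. This yields a uniform-in-$k$ positive lower bound on $\P\bigl(\min_{k/2\le j\le k}Z_{n_j-1}\le -cC_\eps(\log\log n_j)^{1/3}\bigr)$, which suffices once combined with the $0$--$1$ law of Lemma~\ref{l:01}; no second Borel--Cantelli is needed.
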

A comparison with the classical law of iterated logarithm for the simple random walk and the results in \cite{PZ}, will be presented in Section \ref{s:bg}.
While we have stated Theorem \ref{t:main} in the simplest possible form here,  a more detailed discussion on the settings considered in \cite{ledoux}, the conjectured lim inf value and some possible extensions of Theorem \ref{t:main} is presented in Section \ref{s:sharpness}.

Ledoux's proof for the upper tail \cite{ledoux} is reminiscent of the classical law of iterated logarithm for random walk and uses sub-additivity of $T_n$ and the moderate deviation estimates for the largest eigenvalue of LUE from \cite{LR09}. The standard sub-additivity is less useful for the $\liminf$ and hence the weaker result in \cite{ledoux}. 
The starting point in this paper is the observation that the  above issue can be circumvented by considering point-to-line LPP and using stochastic ordering between the same and point-to-point LPP.

The main technical ingredient we rely on then is a new lower bound of the lower tail moderate deviation probabilities for the point-to-line last passage time in Exponential LPP. Formally, for any vertex $v$ and a line $\L,$ define the point-to-line last passage time $T_{v,\L}:=\sup_{w\in \L}T_{v,w}.$ A particularly canonical case is when $v=\mathbf{1}$ and $\L_n=\{x+y=2n\}$, in which case we define
\begin{equation}\label{pointtolinedef}
T^*_{n}:=\sup_{v\in \Z_{+}^{2}: v_1+v_2=2n} T_{\mathbf{1},v}.
\end{equation} 

Similar to how $n^{-1/3}(T_{n}-4n)$ converges to a scalar multiple of the GUE Tracy-Widom distribution, 
it remarkably turns out and is  well-known that $n^{-1/3}(T^{*}_{n}-4n)$ converges to a scalar multiple of the Gaussian Orthogonal Ensemble (GOE) Tracy-Widom distribution \cite{BR01}. We shall prove the following new left tail moderate deviation lower bound with optimal exponent for $T^*_n$, which will be the key ingredient in the proof of Theorem \ref{t:main}.

\begin{theorem}
\label{t:p2l}
There exist $c_0>0$ and $n_0\in \N$ such that for all $n>n_0$ and $x\in (1, n^{2/3})$ we have 
$$\P(T^*_{n}\leq 4n-xn^{1/3})\geq e^{-c_0x^{3}}.$$
\end{theorem}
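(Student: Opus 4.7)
The starting point is a distributional identity $T^*_n \overset{d}{=} \lambda_n(\LOE_n)$ due to Baik--Rains \cite{BR01}, which reduces the theorem to showing
\[
\P\bigl(\lambda_n(\LOE_n) \leq 4n - xn^{1/3}\bigr) \geq e^{-c_0 x^3}.
\]
As indicated in the abstract, the cleanest route is to establish the analog for the general $\beta$-Laguerre ensemble and then specialize to $\beta = 1$; this is the Laguerre counterpart of the Ledoux--Rider bound \cite{LR09} for the Hermite case, and essentially the same strategy should transfer.

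The main tool is the Dumitriu--Edelman tridiagonal model of the $\beta$-Laguerre ensemble, which represents $\lambda_n$ as the squared largest singular value of a bidiagonal matrix $B_n^\beta$ whose nonzero entries are mutually independent chi-distributed random variables. The strategy is to construct an $x$-dependent event forcing the chi entries in the top portion of $B_n^\beta$ (the part that governs the soft-edge spectrum) to lie slightly below their means by carefully chosen amounts $\delta_i$, on which a deterministic bound $\|B_n^\beta\|^2_{\mathrm{op}} \leq 4n - xn^{1/3}$ holds. By independence, the probability of the event factorizes into one-dimensional chi lower-tail probabilities, each of which is evaluated sharply via Gaussian moderate-deviation bounds for $\chi_k - \sqrt{k}$. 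Optimizing the choice of the $\delta_i$ to minimize the total cost $\sum_i \delta_i^2$ subject to the operator-norm constraint yields the exponent $x^3$, in agreement with the classical Tracy--Widom lower-tail heuristic (via, e.g., the stochastic Airy operator).

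The main technical obstacle is the deterministic step linking entry-wise shifts to an upper bound on $\|B_n^\beta\|^2_{\mathrm{op}}$ with the correct quantitative dependence. Naive entry-wise estimates (Gershgorin, Frobenius) are far too weak to recover the exponent $3$. Two natural routes present themselves: (i) a Sturm/oscillation argument on the three-term recurrence satisfied by the principal minors of $B_n^\beta(B_n^\beta)^T$, directly counting eigenvalues above the threshold, or (ii) a Loewner-order comparison upper bounding $B_n^\beta(B_n^\beta)^T$ in the positive semidefinite order by a deterministic tridiagonal ``threshold'' matrix whose top eigenvalue is analyzed via its discrete Airy approximation. With such a deterministic input and the probabilistic computation sketched above, the theorem follows after specializing the $\beta$-Laguerre statement to $\beta = 1$; the restriction $x \in (1, n^{2/3})$ reflects precisely the regime in which the soft-edge Airy picture remains valid and the chi entries to be shifted down stay in the Gaussian moderate-deviation window.
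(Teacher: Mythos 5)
Your overall architecture is the paper's: reduce to a left-tail lower bound for the largest eigenvalue of a Laguerre $\beta$-ensemble via an LOE identity for $T^*_n$, then prove that bound through the Dumitriu--Edelman model by pushing roughly $\eps n$ of the chi entries below their means by roughly $\eps\sqrt{n}$ each, at total cost $e^{-c\eps^3 n^2}=e^{-cx^3}$; that bookkeeping is correct. Two problems remain. A minor one: the identity you quote is not quite right. The correct statement (Proposition~\ref{p:loe}, which the paper has to assemble from \cite{nguyen2017,fitzgerald2019} because no quotable reference exists) is $T^*_n\overset{d}{=}\tfrac12\lambda_{2n-1}(\LOE_{2n-1})$ with the LOE built from a $2n\times(2n-1)$ real Gaussian matrix, not $T^*_n\overset{d}{=}\lambda_n(\LOE_n)$. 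For a bound with an unspecified $c_0$ this only affects constants, but it must be stated and justified correctly.

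The serious gap is the step you yourself flag as the main obstacle: it is the entire difficulty, and neither of your two candidate routes is carried out. A genuinely deterministic bound $\|B_n^\beta\|_{\mathrm{op}}^2\le 4n-xn^{1/3}$ on your event cannot follow from constraining only the ``top portion'' of the entries: by Perron--Frobenius monotonicity you would need upper bounds on \emph{all} $2n-1$ entries, and demanding $X_k\le \E X_k$ for every $k$ already costs $2^{-\Theta(n)}$. So either you allow growing slacks $c_k$ on the unshifted entries and must then prove that the deterministic threshold matrix with profile $\E X_k+c_k-\delta_k$ has top singular value below $(\sqrt m+\sqrt n)(1-\eps)$ --- a quantitative discrete-Airy analysis you have not supplied --- or you give up determinism, which is what the paper does. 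Its resolution is a tilting argument built on linearization: write $\lambda_n=s_n^2$ with $s_n$ the top eigenvalue of a $2n\times 2n$ tridiagonal matrix with independent entries and zero diagonal, dominate the quadratic form $Q$ by $Q_b$ (Lemma~\ref{lem:quadformcomparison}), change measure by scaling the first $K\approx\eps\sqrt{mn}$ odd-indexed chi-squares by $1-\eps$, show that under the tilted law the event $\max_{\|w\|=1}Q_b(w)\le-\tfrac18\eps\sqrt m$ holds with probability at least $0.9$ using a separate concentration estimate for the centered tilted form (Lemma~\ref{lem:probboundsonQ1}, itself a Ledoux--Rider-type upper-tail bound), and then lower-bound the Radon--Nikodym derivative on a further event of probability at least $0.4$. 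That concentration input is precisely what substitutes for your missing deterministic lemma; Section~\ref{s:gershgorin} of the paper shows how easily cruder versions of your strategy degrade to $e^{-cn^{1/3}}$ in the moderate-deviation regime, which is why this step cannot be waved through. As written, the proposal is a correct plan with its central lemma unproved.
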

To prove Theorem \ref{t:p2l}, we shall use the following correspondence from integrable probability between the point-to-line passage time and the largest eigenvalue of the Laguerre Orthogonal Ensemble with certain parameters. Though implicit in the results  of \cite{baik2001, BR01,baik02}, we were not able to find an explicit quotable statement  in the literature and for completeness provide a proof later in the article using results from \cite{fitzgerald2019} and \cite{nguyen2017}.

\begin{proposition}
\label{p:loe}
As defined above, $T_{n}^{*}$ has the same distribution as $\frac12 \lambda_{2n-1}$, where $\lambda_{2n-1}$ is the largest eigenvalue of $\mathrm{LOE}_{2n-1}$ (i.e., the largest eigenvalue of $X^TX$ where $X$ is a $2n\times (2n-1)$ matrix of \iid $N(0,1)$ variables). 
\end{proposition}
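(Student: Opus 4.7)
The plan is to prove Proposition \ref{p:loe} by combining two ingredients from the integrable probability literature: a symmetrization which reduces the point-to-line exponential LPP $T_n^*$ to a point-to-point passage time across a symmetric exponential array of size $(2n-1)\times(2n-1)$, and an RSK-type identity that relates the latter to the largest eigenvalue of $\mathrm{LOE}_{2n-1}$.

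For the symmetrization step, I would exploit reflection through the antidiagonal $\L_n=\{v_1+v_2=2n\}$. Given an up/right path $\gamma$ from $\mathbf{1}$ to $v\in \L_n$, its reflection $\tilde\gamma$ through $\L_n$ is an up/right path from $v$ to $\mathbf{2n-1}$, and the concatenation $\gamma\cup\tilde\gamma$ is a symmetric up/right path from $\mathbf{1}$ to $\mathbf{2n-1}$. Endowing the $(2n-1)\times(2n-1)$ grid with a symmetric environment consisting of iid $\mathrm{Exp}(1)$ weights off the antidiagonal (repeated across reflection) and a suitable independent marginal on the antidiagonal itself, the weight of such a concatenated path becomes exactly $2\ell(\gamma)$, and every symmetric up/right path from $\mathbf{1}$ to $\mathbf{2n-1}$ arises this way. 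This yields
\[
2T_n^* \stackrel{d}{=} T^{\mathrm{sym}}_{2n-1},
\]
where $T^{\mathrm{sym}}_{2n-1}$ denotes the last passage time in this symmetric environment; the argument is a direct instance of the reflection trick developed in Fitzgerald \cite{fitzgerald2019}.

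For the second step, I would invoke the results of Nguyen \cite{nguyen2017}, which, via a symmetric RSK-type correspondence, identify the law of the last passage time through a symmetric exponential array with the largest eigenvalue of $X^T X$ for a rectangular real Gaussian matrix $X$. Specializing the parameters to our setting gives $T^{\mathrm{sym}}_{2n-1} \stackrel{d}{=} \lambda_{2n-1}(\mathrm{LOE}_{2n-1})$, where the right-hand side is the largest eigenvalue of $X^T X$ for a $2n\times(2n-1)$ matrix $X$ of iid $N(0,1)$ entries. Combining with the symmetrization step yields $T_n^* \stackrel{d}{=} \tfrac12 \lambda_{2n-1}(\mathrm{LOE}_{2n-1})$, as required.

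The main obstacle I expect is careful parameter bookkeeping at the antidiagonal. The marginal distribution along $\L_n$ in the symmetric environment has to be chosen so that simultaneously (i) the concatenation map $\gamma\mapsto\gamma\cup\tilde\gamma$ produces paths of weight exactly $2\ell(\gamma)$ with no residual correction from the vertex $v$, and (ii) the resulting symmetric array falls inside the class to which Nguyen's identity applies, producing $\mathrm{LOE}_{2n-1}$ with a $2n\times(2n-1)$ Gaussian matrix rather than a shifted variant. Reconciling the normalization conventions of \cite{fitzgerald2019} and \cite{nguyen2017} with the convention for $\mathrm{LOE}_{2n-1}$ adopted here, and thereby tracking the factor of $\tfrac12$ and the precise dimensions, is the delicate part of the verification.
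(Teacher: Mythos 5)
Your high-level plan (symmetrize the point-to-line problem by reflecting across the antidiagonal and then identify the resulting symmetric exponential LPP with an LOE eigenvalue via a symmetric RSK correspondence) is the classical Baik--Rains route, and it \emph{is} a legitimate way to approach this identity. However, it is not the route the paper takes, and more importantly the two steps you describe are not what the cited references actually establish. The paper uses Theorem 1.2 of \cite{nguyen2017} to go from $\lambda_{2n-1}(\mathrm{LOE}_{2n-1})$ to $4\bigl(\sup_{t\in[0,1]}B_{2n-1}(t)\bigr)^2$ for the top curve $B_{2n-1}$ of a system of non-intersecting Brownian bridges, then the computation before equation (5) of \cite{fitzgerald2019} to identify this with $\sup_{t\ge 0}\lambda_{\max}(H(t)-tI)$ for Hermitian Brownian motion $H$, and finally Theorem 1 of \cite{fitzgerald2019} to identify that with the flat (point-to-line) exponential LPP value, up to a rate-$2$-to-rate-$1$ rescaling that produces the factor $\tfrac12$. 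Neither reference supplies a statement of the form ``symmetric exponential LPP $\stackrel{d}{=}$ LOE largest eigenvalue,'' nor does \cite{fitzgerald2019} develop the antidiagonal reflection trick you invoke; the paper itself emphasizes that the Baik--Rains-style identity is only \emph{implicit} in \cite{baik2001, BR01, baik02} and that no quotable statement could be found, which is precisely why the authors route through Brownian bridges and Dyson-type dynamics instead.

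Beyond the misattribution, there is a genuine unresolved gap in your symmetrization step: the concatenation $\gamma\cup\tilde\gamma$ shares the single vertex $v\in\L_n$, so in a symmetric environment its weight is $2\ell(\gamma)-\xi_v$, not $2\ell(\gamma)$. You acknowledge that the antidiagonal marginal must be chosen carefully, but you do not say what it is, and the choice is not a minor normalization. In the Baik--Rains theory the law of the diagonal entries is exactly what selects between the $\mathrm{GOE}$, $\mathrm{GSE}$, and other symmetry classes; matching the $\mathrm{LOE}_{2n-1}$ ensemble defined in the proposition (a $2n\times(2n-1)$ real Gaussian matrix, i.e.\ $m=n+1$) requires pinning down that marginal and verifying it is exactly the one produced by symmetric RSK with the stated off-diagonal rate. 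As written, the proof defers the entire content of the proposition to this unverified bookkeeping, so the argument is incomplete.
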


Using Proposition \ref{p:loe}, Theorem \ref{t:p2l} will follow from a new general lower deviation tail inequality for Laguerre $\beta$-ensembles, which is our second main result and is of independent interest (see Theorem \ref{t:main2}). In the next section we define $\beta$- ensembles, and review in some detail the relevant literature on them and connections to last passage percolation. 

\subsection{$\beta$-Ensembles, Background and Related Results}
\label{s:bg} Spectra of classical random matrix ensembles are  special cases of a wide class of point processes termed as $\beta$-ensembles which are defined through a family of Gibbs measures, with $\beta$ playing the classical role of inverse temperature. In this framework, the well known Hermite, Laguerre and Jacobi ensembles for parameters $\beta=1,2,4$ are the ones with the classical random matrix theory representations. For instance, the $\beta=2$ case is special as it admits a determinantal structure, for which the Hermite ensemble ($H_{n}$)  corresponds to the eigenvalues of a GUE$_n$ matrix, i.e., a hermitian matrix of size $n$ with i.i.d.\ standard complex Gaussian entries above the diagonal and independent i.i.d.\ real Gaussian entries on the diagonal; while for $m\geq n \geq 1$, the Laguerre ensemble LUE$_{m,n}$ (for $m=n$ this will simply be denoted by LUE$_{n}$) corresponds to the eigenvalues of a complex Wishart matrix, i.e., $X^*X$ where $X$ is an $m\times n$ matrix of i.i.d.\ standard complex Gaussians.

For the purposes of this paper we need to define the general $\beta>0$ version of the LUE.   

\begin{definition}
The Laguerre $\beta$-ensemble ${\rm{LE}}^{\beta}_{m,n},$ with parameters $m\geq n \geq 1$ is a point process on $\R_+$ whose ordered  points $\lambda_{1}\leq \lambda_2 \leq \cdots \leq \lambda_{n}$ have joint density proportional to
\begin{equation}
\label{e:betadensity}
\prod_{1\leq i<j \leq n} |\lambda_{i}-\lambda_{j}|^{\beta} \prod_{i=1}^{n} \lambda_{i}^{\frac{\beta}{2}(m+1-n)-1}e^{-\frac{\beta}{2}\lambda_{i}}.
\end{equation}
\end{definition}
For $\beta=1$ (our one primary case of interest), this is the joint density of eigenvalues of $X^TX$ where $X$ is an $m\times n$ matrix of i.i.d.\ standard (real) Gaussians. In particular, for the case $m=n+1$, the polynomial term in the density vanishes and the corresponding ensemble will be denoted by $\LOE_{n}$, called the Laguerre Orthogonal Ensemble with parameter $n$. For general $\beta$, we shall use Tridiagonal random matrix models for $\beta$-ensembles that were  introduced in the seminal work \cite{dumitriu2002matrix} (see Section \ref{s:beta} for more details). 

It is also well-known that the largest eigenvalues of the Hermite and Laguerre $\beta$- ensembles are in the same universality class, i.e., both of them (when scaled in a way such that the largest eigenvalue grows linearly in $n$) have fluctuations of the order $n^{1/3}$, and after centering and scaling converge to a scalar multiple of ${\rm{TW}}_{\beta}$, the $\beta$ Tracy-Widom distribution \cite{ramirez}. In particular for $\beta=1$ and $\beta=2$, these are the standard GOE and GUE Tracy-Widom distributions respectively. 

There are a number of examples of remarkable couplings which furnish distributional equalities of a process of certain eigenvalue statistics (usually across increasing system size) in random matrix models with statistics occurring in other stochastic processes of interest. It is therefore a natural question to study the joint distribution of such processes, including investigating joint convergence and understanding correlation structure. 

We mention two concrete couplings of relevance to this paper. The first concerns the GUE minor process,  where one starts with an infinite dimensional GUE matrix and considers $M_n,$ the copy of a ${\GUE}_n$ realized as its principal minor of size $n$. The process $\{Y_n\}_{n\ge 0}$ of largest eigenvalues of $\{M_n\}_{n\ge 0}$ was considered by 
\cite{PZ}. 
Answering a question of Kalai {\cite{kalai}}, in this case, they showed that 
\begin{equation}\label{limsup12}
\limsup_{n\to \infty} \frac{\tilde{Y}_{n}}{(\log n)^{2/3}}=\frac{1}{4^{2/3}}
\end{equation}
 where $\tilde{Y}_{n}={(Y_n-\sqrt{2n})}{\sqrt2 n^{1/6}}$ is the centered and scaled version of ${Y}_{n}$ that converges to the GUE Tracy-Widom distribution. For the lower deviations, they have a weaker result showing 
 \begin{equation}\label{liminf12}
 -c_1\leq \liminf_{n\to \infty} \frac{\tilde{Y}_{n}}{(\log n)^{1/3}}\leq -c_2
 \end{equation}
  almost surely for some absolute positive constants $c_1,c_2$. 

For LUE, a geometric coupling is offered via the LPP representation as discussed in \eqref{distributionequality}, and   Ledoux \cite{ledoux} observed a contrasting behaviour where the $\limsup$ (and conjecturally also, $\liminf$) scaled as a power of $\log\log n$ like the classical law of iterated logarithm for the simple random walk and unlike the polylog behaviour for GUE alluded to above. The difference in the two models lies in  the rate of  decay of the correlation functions. In the minor process, ${\rm{Corr}}(Y_{n},  Y_{n+k})$  starts decaying when $k=\Theta(n^{2/3})$, whereas in the LPP coupling, ${\rm{Corr}}(T_{n},  T_{n+k})$ starts decaying only when $k=\Theta(n)$.   
However for the GUE, there is another coupling of $Y_n$ via  Brownian LPP, \cite{baryshnikov, OY02}, an LPP model where the underlying noise is defined by a system of two sided Brownian motion. Since the correlation structure under such a coupling is expected to behave as in the Exponential LPP model considered by \cite{ledoux} and in this article, one can speculate that the law of iterated logarithm replaces the fractional logarithm behaviour under such a coupling. However, although the techniques of this paper are probably enough to establish such claims, we shall not pursue them in this paper and focus only on proving Theorem \ref{t:main}.\\

\noindent
\textbf{Moderate deviation estimates in $\beta$-ensembles:}
Key aspects of the proof in \cite{ledoux}  rely on moderate deviation inequalities, the latter topic for $\beta$-ensembles being a subject of independent interest. It is known \cite{ramirez} that the upper tail of $\beta$ Tracy-Widom distribution decays as $e^{-cx^{3/2}(1+o(1))}$ for $c=c(\beta)=\frac{2}{3}\beta$ whereas the lower tail decays as $e^{-c'|x|^{3}(1+o(1))}$ for $c'=\frac{\beta}{24}$. 
Hence one might expect similar tail decays for the largest eigenvalues of Hermite and Laguerre ensembles. This was considered, for the case $\beta\geq 1$ in the important work \cite{LR09} by Ledoux and Rider. For clarity of exposition as well to maintain context, let us only describe in detail their results in the Laguerre case, although similar (and in some cases stronger, see below) results were proved for the Hermite case as well. Let, as before, $\lambda_{n}$ (we shall always suppress the dependence on $m$ and $\beta$ to reduce notational overhead) denote the largest eigenvalue of ${\LE}^{\beta}_{m,n}$. It was showed in \cite[Theorem 2]{LR09} that for all $\beta\geq 1$, $0<\varepsilon\leq 1$ and $m\geq n$ we have for some absolute constants $C,c>0$
\begin{eqnarray}
\label{e:ledouxupper}
\P(\lambda_{n} \geq (\sqrt{m}+\sqrt{n})^2(1+\varepsilon)) &\leq & Ce^{-c\beta \eps^{3/2} (mn)^{1/2} (\frac{1}{\sqrt{\eps}}\wedge (\frac{m}{n})^{1/4})}\\
\label{e:ledouxbound}
\P(\lambda_{n} \leq (\sqrt{m}+\sqrt{n})^{2}(1-\varepsilon)) & \leq & C^{\beta}e^{-c\beta \eps^{3}mn (\frac{1}{{\eps}}\wedge (\frac{m}{n})^{1/2})}
\end{eqnarray}
In particular, observe that, when $m=n$, and $\eps \approx n^{-2/3}$, this gives the optimal exponents as predicted from the tails of the Tracy-Widom distribution. 

For the Hermite case, \cite{LR09} also proved lower bounds of the the deviation probabilities with matching exponents, see \cite[Theorem 4]{LR09} for the precise statement. It was remarked there that the lower bound for the upper tail deviation probability in the Laguerre case can be proved using their methods but the lower bound for the lower tail would require a different argument. Our second main result in this article is to complement the results of \cite{LR09}, by providing the corresponding lower bound with matching exponents for the lower tail probability in the Laguerre case. 

\begin{maintheorem}
\label{t:main2}
There exists absolute constants $C_0, c,c',c''>0$ such that for any $0<\eps<c'$, and for all integers $m\ge n\geq 1$ and $\bet\ge 1$, we have
\begin{align}\label{e.laguerre lower bound}
\P\{ \lambda_n \le (\sqrt{m}+\sqrt{n})^2(1-\eps)\}\ge \begin{cases}
\exp\{-c\beta(\eps\sqrt{mn})^2\} & \mbox{ if }\eps\ge c''\frac{\sqrt{n}}{\sqrt{m}}, \\
(C_0)^\beta\cdot \exp\{-c\beta(\eps^{\frac{3}{2}}m^{\frac{3}{4}}n^{\frac{1}{4}})^2\} & \mbox{ if } 0<\eps\leq c'' \frac{\sqrt{n}}{\sqrt{m}}.
\end{cases}
\end{align}
%
In the square case, (by taking $c'=c''$) we have the simpler looking
\begin{align}\label{eq:maingoalsquare}
\P\{\lambda_n\le 4n (1-\eps)\}\ge (C_0)^\beta\cdot \exp\{-c\beta(n\eps^{\frac{3}{2}})^2\}.
\end{align}
\end{maintheorem}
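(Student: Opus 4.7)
The plan is to work with the Dumitriu-Edelman tridiagonal representation of $\LE^{\beta}_{m,n}$: with the rescaling built into \eqref{e:betadensity}, $\lambda_n$ is distributed as $\|B\|^{2}$, where $B$ is an $n\times n$ lower-bidiagonal matrix with independent entries, the diagonal entries $B_{ii}$ being chi-distributed with $\beta m,\beta(m-1),\ldots,\beta(m-n+1)$ degrees of freedom and the subdiagonal entries $B_{i+1,i}$ being chi-distributed with $\beta(n-1),\beta(n-2),\ldots,\beta$ degrees of freedom. Splitting $B=D+S$ into its diagonal and strictly subdiagonal parts gives the elementary bound
\[
\|B\|\le \|D\|+\|S\|=\max_{i}B_{ii}+\max_{i}B_{i+1,i},
\]
and I will rely on the standard one-sided chi small-ball estimate $\P(\chi_{\beta k}\le (1-\delta)\sqrt{\beta k})\ge \exp\{-c\beta k\delta^{2}\}$, valid uniformly in $k\ge 1$ and $\delta\in(0,\tfrac12]$, to cheaply force any single chi entry below its mean.

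For the regime $\eps\ge c''\sqrt{n/m}$ I would implement a uniform shrinkage: demand that every diagonal chi entry be at most $(1-\eps/2)\sqrt{\beta m}$ and every subdiagonal chi entry at most $(1-\eps/2)\sqrt{\beta n}$. On this event, $\|B\|^{2}\le \beta(\sqrt m+\sqrt n)^{2}(1-\eps/2)^{2}\le \beta(\sqrt m+\sqrt n)^{2}(1-\eps)$, which yields the desired bound on $\lambda_n$ after undoing the $\sqrt\beta$ scaling. Independence converts the small-ball estimate into a product over the $2n-1$ chi entries, and summing $\beta k\delta^{2}$ with $\delta=\eps/2$ gives total probability at least $\exp\{-C\beta\eps^{2}(mn+n^{2})\}\ge \exp\{-C\beta\eps^{2}mn\}$ using $m\ge n$. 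This is the first branch of \eqref{e.laguerre lower bound}.

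In the regime $\eps<c''\sqrt{n/m}$ uniform shrinkage is suboptimal, because the top eigenvector of $B^{T}B$ concentrates in an edge window of size $k^{*}\ll n$, so that only a correspondingly small block of entries need be suppressed. My plan is a two-scale construction: choose a window size $k$ and a shrinkage $\delta\gg\eps$ to be optimized, constrain the first $k$ diagonal and first $k$ subdiagonal chi entries to lie below $(1-\delta)$ times their respective means, and constrain each of the remaining $2(n-k)$ chi entries merely to lie in a constant-probability interval around its mean. On this combined event I would control $\|B\|^{2}=\sup_{\|v\|=1}\|Bv\|^{2}$ by a Rayleigh-quotient decomposition in the edge versus bulk support: vectors with mass in the first $k$ coordinates are handled by the shrunken edge block, while vectors with mass in the remaining $n-k$ coordinates are handled by a deterministic row-sum or Gerschgorin-type bound together with the upper-tail estimate \eqref{e:ledouxupper} applied to the bulk principal submatrix. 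The probabilistic cost factors into $\exp\{-c\beta\delta^{2}km\}$ from the shrunken edge window and a prefactor $(C_{0})^{\beta}$ from the constant-probability typicality event imposed on the bulk entries. Matching the resulting eigenvalue shift to $\eps(\sqrt m+\sqrt n)^{2}$ ties $\delta$ to $k$, and optimizing over $k$ produces the target exponent $\exp\{-c\beta\eps^{3}m^{3/2}n^{1/2}\}=\exp\{-c\beta(\eps^{3/2}m^{3/4}n^{1/4})^{2}\}$ of the second branch of \eqref{e.laguerre lower bound}.

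The principal obstacle is the second regime, and specifically the block-spectral argument showing that suppressing only the edge window of $B$ is sufficient to force $\|B\|^{2}$ below $(\sqrt m+\sqrt n)^{2}(1-\eps)$ while the bulk entries retain only typical size. This requires a quantitative separation between the edge eigenvector's mass and the bulk contribution at the Laguerre Tracy-Widom scale, together with an upper-tail control on the bulk principal submatrix so that the typicality event there costs no more than the prefactor $(C_{0})^{\beta}$. A secondary technical point is to ensure that the chi small-ball constant is uniform in both $\beta\ge 1$ and in the degree-of-freedom parameter, so that the exponent sums cleanly across all relevant chi variables and the $(C_0)^\beta$ factor is produced by a uniform positive constant at each bulk coordinate.
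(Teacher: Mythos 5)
Your first branch ($\eps\ge c''\sqrt{n/m}$) is a correct and genuinely more elementary route than the paper's: uniform shrinkage of every chi entry together with $\|B\|\le\max_i B_{ii}+\max_i B_{i+1,i}$ and the chi small-ball estimate gives the claimed $\exp\{-c\beta\eps^2 mn\}$ (after fixing the harmless $\delta$-vs-$\eps/2$ bookkeeping, since $(1-\eps/2)^2>1-\eps$). The paper instead derives both branches from a single change-of-measure argument, but your Gershgorin argument is simpler and is also how the paper treats the large-deviation regime $\eps\ge c'$ in Section~\ref{s:gershgorin}.

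The second branch, however, has a genuine gap that you yourself flag as the principal obstacle, and it does not go away with the tools you invoke. The Gershgorin / row-sum bound sees no localization: if only the first $k\approx\eps\sqrt{mn}$ diagonal and subdiagonal entries are shrunk and the rest are typical, then $\max_i B_{ii}$ and $\max_i B_{i+1,i}$ are still essentially $\sqrt m$ and $\sqrt n$, attained just past the shrunken window, so this bound certifies nothing. A Rayleigh-quotient split into ``edge-supported'' and ``bulk-supported'' test vectors fails because the two blocks are coupled through the subdiagonal entry $B_{k+1,k}$, which is of full size $\approx\sqrt{n-k}$; a vector with mass on both sides of the cut cannot be handled by either block alone, and a naive triangle inequality over a column-split of $B$ doubles the norm rather than decreasing it. What is missing is precisely a quantitative mechanism forcing the optimizing vector to localize at the edge. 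The paper supplies this via a quite different route: it linearizes to the $2n\times 2n$ tridiagonal matrix $T$ with independent chi entries (Lemma~\ref{l:corr}), introduces an idealized quadratic form $Q_b$ that dominates the true one (Lemma~\ref{lem:quadformcomparison}) and whose term $-\tfrac{b}{\sqrt n}\sum_k k\,w_k^2$ penalizes mass away from the edge, proves a sub-Gaussian concentration bound for $\sup_w Q_b(w;Z)$ by summation by parts (Lemma~\ref{lem:probboundsonQ1}), and then obtains the lower bound by a change of measure that scales the first $K\approx\eps\sqrt{mn}$ odd entries by $\sqrt{1-\eps}$ and estimates the Radon--Nikodym derivative directly (Lemma~\ref{lem:lbdlefttail}). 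The combination of the linearization and the tilting is what makes the edge-window suppression certifiable; your sketch would need an analogue of the $Q_b$ penalization (or an equivalent spectral localization estimate) to close the argument, and as written it does not.
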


\begin{remark} Observe that the exponents in Theorem \ref{t:main2} are optimal as they match the corresponding upper bounds in \eqref{e:ledouxbound}. Typically $\lambda_n$ fluctuates on the  scale  $\sigma_{m,n} =n^{-1/6}m^{1/2}$, and Theorem \ref{t:main2} (together with \eqref{e:ledouxbound}) shows that if $\frac{m}{n}$ is bounded, then $\P(\lambda_{n}\leq (\sqrt{m}+\sqrt{n})^{2}-x\sigma_{m,n})$ decays like $e^{-cx^{3}}$ for $x$ large, as expected. It is worthwhile to notice the following interesting transition in the regime $\frac{m}{n}\to \infty$. If $n$ is bounded one can observe that $\P(\lambda_{n}\leq (\sqrt{m}+\sqrt{n})^{2}-x\sigma_{m,n})$ decays as $e^{-cx^2}$ as $x$ large. On the other hand if $n\to \infty$ then for each large but fixed $x$,  $\P(\lambda_{n}\leq (\sqrt{m}+\sqrt{n})^{2}-x\sigma_{m,n})$ decays as $e^{-cx^3}$. This transition from Gaussian to Tracy-Widom tail behaviour is not surprising and is understood at the level of Wishart matrices ($\beta=1,2$). For $\beta=2$, there is also an interpretation in terms of the fluctuation of last passage times across a thin rectangle in exponential LPP, which,  via a coupling (or an invariance principle) can also be extended to more general LPP models \cite{Martin, Baik-Suidan, Suidan}.  
\end{remark}

The proof of Theorem \ref{t:main2}, at a high level, follows the general program of \cite{LR09}. To obtain the upper bounds for tails in Laguerre and Hermite $\beta$-ensembles \cite{LR09} used the the bi-diagonal  and tri-diagonal models respectively for these ensembles. For the Hermite case, the proof of lower bound of the left tail in \cite{LR09} relied on the  independence in the tridiagonal model and Gaussianity of the diagonal entries in a crucial way, which unfortunately is not available for the bi-diagonal models, and hence could not be extended to the Laguerre case, as pointed out in \cite{LR09}. We circumvent this issue by using the idea of linearisation, elaborated in Section \ref{s:beta}, which lets us write $\lambda_{n}=s_{n}^2$ where $s_n$ is the largest eigenvalue of a certain $2n\times 2n$ tridiagonal matrix with independent entries. Note that Theorem \ref{t:main2} covers $\eps$ only up to some small constant. When $\frac{n}{m}$ is bounded away from $0$, one can prove similar tail bounds for $c'\leq \eps <1$, by a much simpler argument presented in Section \ref{s:gershgorin}.

\subsection{Organization of the article}
The rest of this paper is organised as follows. In Section \ref{s:lil} we provide a proof of Proposition \ref{p:loe}, prove Theorem \ref{t:p2l} using Theorem \ref{t:main2}, and complete the proof of Theorem \ref{t:main} using Theorem \ref{t:p2l}. In Section \ref{s:beta}, we provide the proof of Theorem \ref{t:main2}.

\subsection*{Acknowledgements} The authors thank Ofer Zeitouni for bringing the law of iterated logarithm question to their attention and Ivan Corwin for pointing out the LOE connection. RB is partially supported by a Ramanujan Fellowship (SB/S2/RJN-097/2017) from the Government of India and an ICTS-Simons Junior Faculty Fellowship. SG is partially supported by a Sloan Research Fellowship in Mathematics and NSF Award DMS-1855688. MH is supported by a summer grant of the UC Berkeley Mathematics department. MK is partially supported by  UGC Centre for Advanced Study and the SERB-MATRICS grant MTR2017/000292.

\section{The Law of Iterated Logarithm: Proof of Theorem \ref{t:main}}
\label{s:lil}
We start by proving Proposition \ref{p:loe}. As explained before, this result is implicitly known, following works of Baik and Rains in early 2000s, but we could not find a precise reference in the literature and hence  for completeness provide a short proof using the recent works \cite{nguyen2017,fitzgerald2019}, borrowing their notations where convenient. 

\begin{proof}[Proof of Proposition \ref{p:loe}]
Theorem 1.2 of \cite{nguyen2017} says that 
$$\lambda_{2n-1} \stackrel{d}{=} 4\left(\sup_{t\in[0,1]} B_{2n-1}(t)\right)^2,$$
where $B_1<\ldots<B_{2n-1}$ is a collection of $2n-1$ non-intersecting Brownian bridges on $[0,1]$ and $\lambda_1<\ldots< \lambda_{2n-1}$ are the eigenvalues of a LOE matrix $X^TX$, where $X$ is a $2n\times (2n-1)$   matrix with \iid $N(0,1)$ random variables.

Now the calculation immediately preceding equation (5) in \cite{fitzgerald2019} shows that
$$\left(\sup_{t\in[0,1]} B_{2n-1}(t)\right)^2 \stackrel{d}{=} \sup_{t\geq 0} \lambda_{\mathrm{max}}(H(t) - tI),$$
where $H(t)$ is a $(2n-1)\times(2n-1)$ Hermitian Brownian motion, i.e. a $(2n-1)\times (2n-1)$ Hermitian matrix with \iid standard complex Brownian motions below the diagonal and \iid standard real Brownain motions along the diagonal. Finally, Theorem 1 of \cite{fitzgerald2019} says that 
$$\sup _{t \geq 0} \lambda_{\max }(H(t)-tI) \stackrel{d}{=} \max _{\pi \in \Pi_{n}^{\mathrm{flat}}} \sum_{(i,j) \in \pi} \xi'_{i j},$$
where $\xi'_{ij}$ are \iid rate 2 exponential random variables, and $\Pi_{n}^{\mathrm{flat}}$ is the collection of up-right paths from $(1,1)$ to the line $i+j = 2n$. Combining these and using the scaling property of exponentials then yields that
$$T^*_n=\max _{\pi \in \Pi_{n}^{\mathrm{flat}}} \sum_{(i,j) \in \pi} \xi_{i j}\stackrel{d}{=} \frac12\lambda_{2n-1},$$
where $\xi_{ij}$ are \iid rate 1 exponential random variables and the first equality is by definition \eqref{pointtolinedef}.
\end{proof}

We next prove Theorem \ref{t:p2l}, which is an almost immediate consequence of Proposition \ref{p:loe} and Theorem \ref{t:main2}.

\begin{proof}[Proof of Theorem \ref{t:p2l}]
Using Proposition \ref{p:loe} and setting the parameters $(m,n) = (2n,2n-1)$, $\beta=1$, and $\eps = \frac{1}{4}xn^{-2/3}$ it follows that
\begin{align*}
\P\left(T_{n}^* \leq 4n-xn^{1/3}\right) = \P\left(\lambda_{2n-1}\leq 8n - 2xn^{1/3}\right) 
&\geq \P\left(\lambda_{2n-1} \leq (\sqrt{2n}+\sqrt{2n-1})^2(1-\varepsilon)\right)
\end{align*}
and the proof is completed by invoking Theorem \ref{t:main2}.
\end{proof}
Note that Theorem \ref{t:main} states that  $\liminf \frac{Z_{n}}{(\log \log n)^{1/3}}$ is almost surely a constant. This is proved in the next lemma, which then reduces proving Theorem \ref{t:main} to showing that it is bounded below from $0$ with positive probability. 

\begin{lemma}
\label{l:01}
$\liminf\limits_{n\to\infty} \frac{Z_{n}}{(\log\log n)^{1/3}}$ is a constant, almost surely.
\end{lemma}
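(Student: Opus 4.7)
The plan is to invoke Kolmogorov's zero-one law after showing that the random variable $L:=\liminf_{n\to\infty} Z_n/(\log\log n)^{1/3}$ is invariant under modification of finitely many of the i.i.d.\ weights $\{\xi_v\}_{v\in\Z_+^2}$. Concretely, I would enumerate $\{\xi_v\}_{v\in\Z_+^2}$ as a sequence $(\eta_k)_{k\geq 1}$ (any enumeration will do) and argue that $L$ is measurable with respect to the tail $\sigma$-algebra $\bigcap_{K\geq 1} \sigma(\eta_k : k\geq K)$, which is trivial by Kolmogorov's zero-one law.

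The key deterministic input is a simple stability bound for $T_n$ under perturbation of the weight field. Fix a finite set $F\subset\Z_+^2$ and let $(\xi'_v)_{v\in\Z_+^2}$ be another weight field agreeing with $(\xi_v)$ outside $F$. If $\pi$ is the a.s.\ unique maximising path for $T_n$ in the original field, then its weight in the modified field equals $T_n+\sum_{v\in\pi\cap F}(\xi'_v-\xi_v)$, giving
\[
T'_n \;\geq\; T_n \;-\; \sum_{v\in F} |\xi_v - \xi'_v|,
\]
and by symmetry $|T_n - T'_n| \leq \sum_{v\in F} |\xi_v-\xi'_v|$. This difference is a finite random variable not depending on $n$, so dividing by $n^{1/3}(\log\log n)^{1/3}$ shows that the corresponding $Z_n$ and $Z'_n$ differ by $o(1)$ as $n\to\infty$; hence $L$ is exactly unchanged by the modification.

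With this invariance in hand, fix any $c\in[-\infty,\infty]$ and consider the event $A_c := \{L\leq c\}$. For every $K$, $A_c$ is determined by the values $(\eta_k)_{k\geq K}$, since replacing the first $K-1$ entries by arbitrary values leaves $L$ unchanged by the argument above. Therefore $A_c$ lies in the tail $\sigma$-algebra of the i.i.d.\ sequence $(\eta_k)$, and Kolmogorov's zero-one law gives $\P(A_c)\in\{0,1\}$. Since this holds for every $c$, the distribution of $L$ is concentrated on a single point, proving that $L$ is almost surely a constant.

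There is no real obstacle here; the only thing to be careful about is that the stability bound is a pathwise (not just distributional) inequality, which is why the liminf is exactly — not just in distribution — invariant under finite modifications, permitting the direct application of the zero-one law.
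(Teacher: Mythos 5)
Your proof is correct. Both you and the paper reduce the lemma to Kolmogorov's zero--one law by exhibiting $L=\liminf_n Z_n/(\log\log n)^{1/3}$ as a tail random variable, but the way tail-measurability is established differs. The paper's route is geometric: for each $r$ it sandwiches $T_n$ between $\sup_{v\in \L_r} T_{v,\mathbf{n}}$ and $\sup_{v\in \L_r} T_{v,\mathbf{n}}+\sup_{v\in\L_r}T_{\mathbf{1},v}$, so that $L$ is measurable with respect to the weights on and above each anti-diagonal line $\L_r$, and the tail $\sigma$-algebra is the one generated by the successive diagonal strips. Your route is the perturbative Lipschitz bound $|T_n-T'_n|\le\sum_{v\in F}|\xi_v-\xi'_v|$ for fields agreeing off a finite set $F$, which shows $L$ is unchanged by any finite modification and hence lies in the tail $\sigma$-algebra of an arbitrary enumeration of the weights. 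Your argument is slightly more general (it uses nothing about the oriented geometry, only that each $T_n$ is $1$-Lipschitz in each weight, so it would apply verbatim to other normalizations and other lattice functionals with the same stability), while the paper's strip decomposition is the one that is reused in the proof of Proposition~\ref{p:positive}, which is presumably why it is phrased that way. One cosmetic remark: you do not need the a.s.\ uniqueness of the geodesic --- any maximizing path gives the one-sided bound, so the stability estimate holds surely, which is exactly the pathwise statement you correctly flag as necessary for tail-measurability.
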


\begin{proof}
This is a straightforward consequence of the Kolmogorov 0-1 law since the random variable in question is a tail random variable. To see this, fix $r$ and let $\L_r$ be the line $x+y=2r$. Then, since all the variables $\xi_{v}$ are non-negative, by definition, for $n\ge r$,
$$\sup_{v\in \L_r} T_{v, \mathbf{n}} \leq T_{n} \leq \sup_{v\in \L_r} T_{v, \mathbf{n}} + \sup_{v\in\L_r} T_{0,v}.$$
Now clearly $\sup_{v\in \L_r} T_{v, n}$ is a function of only the independent field including and above the line $\L_r$, while
$$\liminf_{n\to\infty} \frac{\sup_{v\in\L_r} T_{0,v}-4r}{n^{1/3}(\log\log n)^{1/3}} = 0,$$
as $\sup_{v\in \L_r} T_{0,v}$ is finite a.s. Thus, for every $r$,
$$\liminf_{n\to\infty} \frac{T_{n}-4n}{n^{1/3}(\log\log n)^{1/3}} = \liminf_{n\to\infty} \frac{\sup_{v\in\L_r}T_{v,\mathbf{n}}- 4(n-r)}{n^{1/3}(\log\log n)^{1/3}},$$
and hence it is a tail random variable.
\end{proof}
Observe that the same argument would also show that $\limsup _{n\to\infty} Z_{n}/(\log\log n)^{2/3}$ is constant almost surely.



We are now ready to prove the following intermediate result, which in conjunction with Lemma~\ref{l:01} finishes the proof of  Theorem~\ref{t:main}.

\begin{proposition}
\label{p:positive}
For $c_0$ as in Theorem \ref{t:p2l}, there exists $\delta>0$ such that 
$$\P\left(\liminf_{n\to \infty} \frac{Z_{n}}{(\log \log n)^{1/3}} \leq -(2c_0)^{-1/3}\right)\geq \delta.$$
\end{proposition}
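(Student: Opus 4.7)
The plan is to exploit near-independence of passage times across disjoint antidiagonal strips, combined with Theorem~\ref{t:p2l} and a Borel--Cantelli argument. I fix a rapidly growing subsequence $n_k$ (say $n_k = \lceil e^{k^2}\rceil$), set $m_k := n_k - n_{k-1}$, and aim to prove the stronger statement that, almost surely,
\[
Z_{n_k}/(\log\log n_k)^{1/3} \leq -(2c_0)^{-1/3}(1-o(1)) \quad \text{for infinitely many } k,
\]
which yields Proposition~\ref{p:positive} with $\delta = 1$.

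The starting point is that every up/right path from $\mathbf{1}$ to $\mathbf{n_k}$ crosses the antidiagonal $\L_{n_{k-1}}$ at a unique vertex $u$, giving
\[
T_{n_k} \;=\; \max_{u \in \L_{n_{k-1}} \cap \Z_+^2}\bigl(T_{\mathbf{1},u} + T_{u,\mathbf{n_k}} - \xi_u\bigr) \;\leq\; T^*_{n_{k-1}} + U_k, \qquad U_k := \max_{u \in \L_{n_{k-1}} \cap \Z_+^2} T_{u,\mathbf{n_k}},
\]
where $U_k$ depends only on the annulus $\{(i,j) \in [1,n_k]^2 : 2n_{k-1} \leq i+j \leq 2n_k\}$. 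Consecutive annuli share only $\mathbf{n_k}$, so writing $U_k = \tilde U_k + \xi_{\mathbf{n_k}}$ produces variables $\tilde U_k$ that are genuinely independent across $k$. The lattice reversal $(i,j)\mapsto(n_k+1-i, n_k+1-j)$ identifies $U_k$ in distribution with $\max_{u' \in S_k} T_{\mathbf{1}, u'}$ where $S_k \subseteq \L_{m_k+1}\cap \Z_+^2$, yielding the key distributional bound that $U_k$ is stochastically dominated by $T^*_{m_k+1}$.

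Setting $y_k := (2c_0)^{-1/3}(m_k/n_k)^{1/3}(\log\log n_k)^{1/3}$ and choosing $x_k$ so that $4(m_k+1) - x_k(m_k+1)^{1/3} = 4m_k - y_k n_k^{1/3}$ (so $x_k \sim y_k(n_k/m_k)^{1/3}$), Theorem~\ref{t:p2l} yields
\[
\P\bigl(\tilde U_k \leq 4m_k - y_k n_k^{1/3}\bigr) \;\geq\; \P\bigl(T^*_{m_k+1} \leq 4(m_k+1) - x_k(m_k+1)^{1/3}\bigr) \;\geq\; e^{-c_0 x_k^3} \;\geq\; (\log n_k)^{-1/2 - o(1)},
\]
since $n_k/m_k \to 1$. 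For $n_k = \lceil e^{k^2}\rceil$ this lower bound is of order $k^{-1}$ up to subpolynomial factors, so the sum over $k$ diverges. The second Borel--Cantelli lemma applied to the independent events $\tilde E_k := \{\tilde U_k \leq 4m_k - y_k n_k^{1/3}\}$ shows $\tilde E_k$ occurs for infinitely many $k$ almost surely. Separately, the first Borel--Cantelli lemma---combined with the standard upper-tail moderate deviation $\P(T^*_n - 4n \geq tn^{1/3}) \leq Ce^{-ct^{3/2}}$ available from \cite{ledoux, LR09} and the exponential tail of $\xi_{\mathbf{n_k}}$---ensures that almost surely $T^*_{n_{k-1}} - 4n_{k-1} = O(n_{k-1}^{1/3}(\log k)^{2/3})$ and $\xi_{\mathbf{n_k}} = O(\log k)$ hold for all but finitely many $k$. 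Since $n_{k-1}/n_k = e^{-(2k-1)}$, both error terms are $o(n_k^{1/3}(\log\log n_k)^{1/3})$, and the conclusion follows.

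The main technical obstacle is the stochastic domination of $U_k$ by $T^*_{m_k+1}$: because $U_k$ is a maximum over many starting points, it is a priori larger than any single point-to-point passage, and in particular exceeds $T^*_{m_k}$, which would make the lower-tail estimate of Theorem~\ref{t:p2l} useless in the wrong direction. The lattice-reversal trick interchanges starting and ending points and reveals that $U_k$ is in fact a point-to-line passage maximized over a (proper) subset of $\L_{m_k+1}$, making Theorem~\ref{t:p2l} directly applicable. The secondary concern---the single-vertex overlap between consecutive annuli at $\mathbf{n_k}$---is trivially resolved by the $\xi_{\mathbf{n_k}}$ peel-off, which contributes only an $O(\log k)$ error.
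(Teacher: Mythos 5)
Your proof is correct and establishes the (stronger) almost sure statement, but it takes a genuinely different route from the paper. The paper works on the dyadic subsequence $n_j = 2^j$, chooses the endpoint $(n_j-1,n_j-1)$ so that lattice reversal turns the line-to-point weight into \emph{exactly} a copy of $T^*_{n_{j-1}}$, and then runs a stopping-time argument: it sets $\tau$ to be the largest $j$ for which the line-to-point weight is small, observes that $\{\tau=j\}$ lives in the $\sigma$-algebra above $\L_{n_{j-1}}$ while $B_j$ (the complementary event controlling the weight below) lives in the $\sigma$-algebra below, and extracts a uniformly positive probability $\delta$; the $0$--$1$ law (Lemma~\ref{l:01}) then upgrades this to a.s. Your proof instead uses a super-exponentially growing subsequence $n_k=\lceil e^{k^2}\rceil$ so that $m_k/n_k\to 1$, observes (again by lattice reversal, but now with the endpoint $(n_k,n_k)$ so that the image $S_k$ is only a \emph{subset} of $\L_{m_k+1}$, yielding stochastic domination $U_k \preceq T^*_{m_k+1}$ rather than distributional equality), peels off $\xi_{\mathbf{n_k}}$ to get exact independence of the annular variables (the same role the paper's $\widetilde{T}^*$ plays), and then runs the second Borel--Cantelli lemma on $\tilde E_k$ together with a first Borel--Cantelli control on $T^*_{n_{k-1}}$ and $\xi_{\mathbf{n_k}}$. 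The trade-offs: your argument is conceptually cleaner (the two BC lemmas replace the stopping-time bookkeeping) and yields $\delta=1$ without invoking Lemma~\ref{l:01}, but it needs an additional input — the upper-tail moderate deviation $\P(T^*_n\ge 4n+tn^{1/3})\le Ce^{-ct^{3/2}}$ (which does follow from Proposition~\ref{p:loe} and \eqref{e:ledouxupper}) — whereas the paper gets away with the much softer fact $\P(\widetilde T^*_n<4n)\ge 2\delta$. Both arguments land on the same constant $(2c_0)^{-1/3}$, and both can be pushed toward $c_0^{-1/3}$ by varying the subsequence (Remark~\ref{r.improved constant} in the paper, or e.g.\ $n_k=\lceil e^{k\log k}\rceil$ in your scheme, since with $e^{k^2}$ the divergence of $\sum e^{-c_0x_k^3}$ forces the factor $2$ in the exponent).
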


The choice of parameters in the proof can be slightly tweaked allowing us to replace $(2c_0)^{-1/3}$ by $c_0^{-1/3},$ as pointed out in  Remark~\ref{r.improved constant}.

\begin{proof}
Define $n_j = 2^j$ and fix $k>1$ even and recall that for any $r\in \Z_+$, $\L_r$ denotes the line $x+y=2r$. Our objective is to establish that for $\tilde c=(2c_0)^{-1/3}>0$, as in the statement of the proposition, and for every $k$ sufficiently large, with probability $\delta>0$ there exists a $j$ such that $k/2\leq j\leq k$ and $$T_{n_j} \leq 4n_j - \tilde c n_{j}^{1/3}(\log\log n_{j})^{1/3}.$$
Clearly this will suffice.  

\medskip 

To achieve this, we will divide up the region from $(1,1)$ to $(n_k, n_k)$ in dyadic scales(so the $j$th region is between $\L_{n_{j-1}}$ and $\L_{n_j}$), and proceed by examining these regions sequentially, starting from the $k^{th}$ region and decreasing the index, till we find the first $j$ such that $T_{\mathbf{n_j}, \L_{n_{j-1}}}$ (weight of the largest weight path from the line $\L_{n_{j-1}}$ to $({n_j}, n_j)$) is sufficiently low. Suppose this region is the one between $\L_{n_{j-1}}$ and $\L_{n_j}$. Since, we know that the regions are disjoint and hence independent, with high probability the point-to-line weight from $(1,1)$ to $\L_{n_{j-1}}$ does not have too high a weight and hence on the intersection of the above events, one can conclude that $T_{n_j}$ is low, finishing the proof (see Figure \ref{f.dyadic point to line} for an illustration). The rigorous  argument will require some minor tweaks to the above high level description to ensure the necessary independence.

\begin{figure}
\centering{\includegraphics[width=0.6\textwidth]{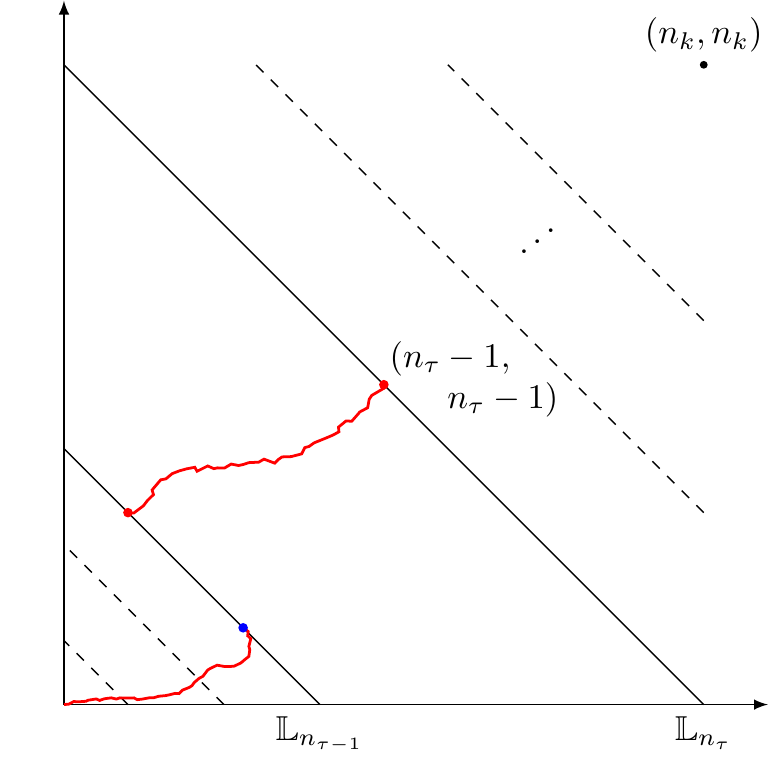}}
\caption{The argument of Proposition~\ref{p:positive}. The dotted lines to the right of $(n_\tau-1, n_\tau - 1)$ are the boundaries of regions where the event $A_j$ did not occur, i.e. the line-to-point weight was not sufficiently low. The region bounded by black lines is the first region, when traveling southwest, that $A_j$ occurs, and the corresponding line-to-point polymer is marked in red. Since the red point-to-line polymer from $(1,1)$ to $\L_{n_{\tau -1}}$ has weight less than $4n_{\tau-1}$, the weight of the point-to-point polymer from $(1,1)$ to $(n_\tau-1, n_\tau-1)$ must have weight less than $4n_\tau - (1-\varepsilon)^{1/3}c_0^{-1/3} n_{\tau-1}^{1/3}(\log\log n_{\tau-1})^{1/3}$. The red points are endpoints of polymers whose weight is included in the weight of the polymer, while the blue point is the endpoint whose weight is not included. }\label{f.dyadic point to line}
\end{figure}

Moving now to make the above argument precise, let us, for notational convenience, denote by $T^*_{(j)}$, the weight of the line-to-point polymer from the anti-diagonal line $\L_{n_{j-1}}$ to the point $(n_j-1, n_j-1)$. We note that, since $2n_{j-1} = n_j$, by the symmetry of the random environment we get  
$$T^*_{(j)} \ \smash{\stackrel{d}{=}} \ T^*_{n_{j-1}},$$ and that $T_{(j)}^*$ are independent as $j$ varies. Also throughout the proof, $c_0$ is as in Theorem \ref{t:p2l}.

Let us fix $\varepsilon>0$ sufficiently small, and set $C_{\varepsilon}:=(1-\varepsilon)^{1/3}c_0^{-1/3}$. We need to define a number of events. For $j\in \N$ let us define
$$ A_{j}:=\left\{T^*_{(j)} \leq 4n_{j-1} - C_{\varepsilon}n^{1/3}_{j-1}(\log \log n_{j-1})^{1/3}\right\},$$
and for a fixed $k$ sufficiently large let us set $A:= \bigcup_{j=k/2}^k A_j$.
Now define
$\tau = \max\{j\leq k : A_j \text{ occurs }\}$
so that on $A$, $\tau > k/2$. For $v=(x,y)\in \Z^2$ we shall denote the random variable $\xi_{v}$ by $\xi_{x,y}$ for notational convenience. 
Define the $\sigma$-algebras
\begin{align*}
\F_{j} := \sigma\left(\left\{\xi_{x,y} : x+y < 2n_{j-1}\right\}\right) \qquad \text{and} \qquad
\F^c_{j} := \sigma\left(\left\{\xi_{x,y} : x+y \geq 2n_{j-1}\right\}\right).
\end{align*}
Notice that the event $\{\tau=j\}$ is $\F_j^c$-measurable. Define the events
\begin{align*}
B_j = \left\{\widetilde T^*_{n_{j-1}} < 4n_{j-1}\right\} \qquad \text{and} \qquad
B_{\tau} = \left\{\widetilde T^*_{n_{\tau-1}} < 4n_{\tau-1}\right\},
\end{align*}
where $\widetilde T^*_{n}$ is the weight of the best path from $(1,1)$ to the line $x+y=2n$ with the weight of the final vertex excluded, i.e., $$\widetilde T^*_{n}= \sup_{v\in \L_n} (T_{{\mathbf{1}}, v}-\xi_v).$$

We want to show that $A\cap B_{\tau}$ occurs with uniformly positive probability. To this end, note that using Theorem \ref{t:p2l}, we have
$\P\left(T^*_{(j)} \leq 4n_{j-1} -xn^{1/3}_{j-1}\right) \geq e^{-c_0x^{3}}$
for $x\in (1,n^{2/3})$ and setting $x=C_{\varepsilon} (\log \log n_{j-1})^{1/3}$ we get
$$\P(A_{j})\geq e^{-(1-\eps)\log \log n_{j-1}} \geq \frac{1}{j^{1-\eps}},$$
since $n_j=2^j$ and $\log 2 < 1$. Then, using independence of the $A_j$'s we have for any $\delta >0$,
\begin{equation}\label{e.A prob lower bound}
\P\left(A\right) \geq 1-\left(1-\frac{1}{k^{1-\varepsilon}}\right)^{k/2}\geq 1-\delta 
\end{equation}
for $k > k_0(\varepsilon, \delta)$.

Note that $T^*_{n}$ is $\mathcal{F}_{j}$ measurable.  Hence we have
\begin{equation}
\P(B_\tau \mid \tau = j) = \P(B_j) \label{e.b_tau},
\end{equation}
since $B_j \in \F_j$ and $\{\tau = j\} \in \F_j^c$, which are independent $\sigma$-algebras (it is for this independence that we removed the weight of the last vertex in the definition of $\widetilde T^*_{n}$). Since $\widetilde T^*_{n_j} \leq T^*_{n_j}$ for every $j$, from Theorem \ref{t:p2l} there exists $\delta>0$ such that $\P(B_j) \geq 2\delta$ for all $k/2\leq j\leq k$ and all $k$ large enough\footnote{This in fact is just a straightforward consequence of the weak convergence result alluded to right after \eqref{pointtolinedef}.}, and so \eqref{e.A prob lower bound} and \eqref{e.b_tau} together imply that,
$\P(A\cap B_\tau) \geq \delta,$
for large enough $k$.

Now on $A\cap B_\tau,$ we have $k/2\leq \tau \leq k$ and 
\begin{align*}
{T_{n_\tau-1}} \leq \widetilde T^*_{n_{\tau-1}}+T^*_{(\tau)}
 &< 4n_{\tau-1}+4(n_{\tau}-n_{\tau-1})  - C_{\varepsilon}n_{\tau-1}^{1/3}(\log \log n_{\tau-1})^{1/3}\\
&= 4n_\tau - C_{\varepsilon}n_{\tau -1}^{1/3}(\log \log n_{\tau-1})^{1/3}.
\end{align*}

Since $n_{\tau-1} = n_\tau/2$, replacing $\varepsilon$ by $2\varepsilon,$  for all large enough $k$, we get, 
\begin{align}
\label{e:abtau}
\P\left(\bigcup_{j=k/2}^k \left\{\frac{T_{n_j-1} - 4(n_j-1)}{(n_j-1)^{1/3}(\log\log (n_j-1))^{1/3}} < -2^{-1/3}C_{2\varepsilon}\right\}\right) \geq \P\left(A\cap B_\tau\right)\ge \delta,
\end{align}

 This shows that with probability at least $\delta$, $$\liminf_{n\to\infty} \frac{T_{n} - 4n}{n^{1/3}(\log\log n)^{1/3}} < -2^{-1/3}C_{2\varepsilon}= (1-2\eps)^{1/3}(2c_0)^{-1/3}.$$ 
Indeed, we have
\begin{align*}
\P\left(\liminf_{n\to\infty} \frac{T_{n} - 4n}{n^{1/3}(\log\log n)^{1/3}} < -2^{-1/3}C_{2\varepsilon}\right) &= \P\left(\bigcap_{k=0}^{\infty} \bigcup_{j=k}^{\infty} \left\{\frac{T_{j} - 4j}{j^{1/3}(\log\log j)^{1/3}} < -2^{-1/3}C_\varepsilon\right\}\right)\\
&= \lim_{k\to\infty} \P\left(\bigcup_{j=k}^{\infty} \left\{\frac{T_{j} - 4j}{j^{1/3}(\log\log j)^{1/3}} < -2^{-1/3}C_{2\varepsilon}\right\}\right)\\
&\geq \lim_{k\to\infty}\P\left(\bigcup_{j=2^{k/2}-1}^{2^{k}-1} \left\{\frac{T_{j} - 4j}{j^{1/3}(\log\log j)^{1/3}} < -2^{-1/3}C_{2\varepsilon}\right\}\right)\\
&\geq \delta.
\end{align*}
where the last inequality follows from \eqref{e:abtau}. As this is true for every $\varepsilon>0$, sending $\eps$ to $0$ we get that
$$\P\left(\liminf_{n\to\infty} \frac{Z_{n}}{(\log\log n)^{1/3}} \leq -(2c_0)^{-1/3}\right) = \P\left(\liminf_{n\to\infty} \frac{T_{n} - 4n}{n^{1/3}(\log\log n)^{1/3}} \leq -(2c_0)^{-1/3}\right) \geq \delta,$$
completing the proof.
\end{proof}

\begin{remark}\label{r.improved constant}
If instead of a dyadic breakup with $n_j = 2^j$ as in the proof, choosing $n_j = \lceil(1+\eta)^j\rceil$ for some $\eta>0$, allows us to replace $(2c_0)^{-1/3}$ in the statement of the proposition by $\left((1+\eta^{-1})c_0\right)^{-1/3}$, which, on taking {$\eta\to \infty$}, converges to $c_0^{-1/3}$.
\end{remark}

\subsection{Sharpness and possible extensions}
\label{s:sharpness}

We wrap up this section with a discussion on the sharpness of our argument and several possible extensions. The first natural question to determine 
the limiting value of $\liminf_{n\to \infty} \frac{Z_{n}}{(\log \log n)^{1/3}}$. It is widely believed that for $T_{n}$, a stronger moderate deviation estimate than is given by the results of \cite{LR09} holds. As is well known, it was shown in \cite{Jo99} that $2^{-4/3}n^{-1/3}(T_{n}-4n)$ converges weakly to the GUE Tracy-Widom distribution ($\beta=2$). Comparing with the tails of Tracy-Widom distribution from \cite{ramirez}, one can make predictions about the optimal constants for the tail estimates in \eqref{e:ledouxupper} and \eqref{e:ledouxbound} which are indeed conjectured to be true. In particular, it is believed that 
\begin{equation}
\label{e:upper}
\log \P(T_{n}\geq 4n+xn^{1/3})=-\frac{4}{3}(2^{-4/3}x)^{3/2}+ O(x^2n^{-1/3})+O(\log x)
\end{equation}
for all large $x\leq \delta n^{2/3}$ and for all $n$ sufficiently large. Similarly, for the lower tail, it is believed that 
\begin{equation}
\label{e:lower}
\log \P(T_{n}\leq 4n-xn^{1/3})=-\frac{1}{12}(2^{-4/3}x)^{3}+ O(x^4n^{-4/3})+O(\log x)
\end{equation}
for all large $x\leq \delta n^{2/3}$ and for all $n$ sufficiently large. Such sharp results have indeed been proved in cases of Poissonian and Geometric last passage percolation using the Riemann-Hilbert approach in \cite{LM01, LMS02, BXX01, CLW15}. Under the stronger hypothesis \eqref{e:upper}, Ledoux in \cite{ledoux} showed that $$\limsup_{n\to \infty} \frac{Z_{n}}{(\log \log n)^{2/3}}=3^{2/3}, \text{ almost surely} .$$  He also conjectured based on the believed bound \eqref{e:lower} that  
\begin{equation}\label{conjconst}
\liminf_{n\to \infty} \frac{Z_{n}}{(\log \log n)^{1/3}}=-192^{1/3}.
\end{equation} (The statistic considered in \cite{ledoux} is $2^{-4/3}n^{-1/3}(T_{n}-4n)$ and so the numerical values there differ from the ones above by a factor of $2^{4/3}$.) Furthermore, the Borel-Cantelli lemma based argument in \cite{ledoux} in conjunction with the conjectured 
bound \eqref{e:lower}, does yield a lower bound of $-192^{1/3}$ for the LHS in \eqref{conjconst}.


On the other hand in the point-to-line case, as indicated before, it is known ({see \cite{BR01, Sas05,BFPS07}}) that $2^{-2/3}n^{-1/3}(T^{*}_{n}-4n)$  converges weakly to GOE Tracy-Widom distribution ($\beta=1$). In analogy with \eqref{e:lower}, comparing with the left tail of GOE Tracy-Widom distribution, the optimal estimate in this case is predicted to be   
\begin{equation}\label{predic23}
\P(T^*_{n} \leq 4n-xn^{1/3})=-\frac{x^3}{96}(1+o(1)),
\end{equation}
for $x\ll n^{2/3}$. Even though we are  unaware of such a sharp estimate in the literature, \eqref{predic23}, along with our arguments will indeed imply that $$\liminf_{n\to \infty} \frac{Z_{n}}{(\log \log n)^{1/3}} \leq -96^{1/3}$$ almost surely (see Remark \ref{r.improved constant}), which is still far from the conjectured value in \cite{ledoux}, indicating, not surprisingly, that dominating the point-to-point passage times by point-to-line counterparts incurs a loss in the constant.

Going beyond Exponential LPP, Ledoux points out in \cite{ledoux} that his results hold also for LPP on $\Z^2$ with geometrically distributed weights. This is because, the upper bounds for the moderate deviation probabilities (for both the left and right tails), i.e., analogues of \eqref{e:ledouxupper}, \eqref{e:ledouxbound} are available also for the geometric case (see e.g.\ \cite{BXX01, CLW15}) which are the only inputs needed for the argument of \cite{ledoux}. On the other hand, for our argument, we rely on the lower tail bounds for the point-to-line last passage times and while there does exist an explicit distributional formula for the latter for the geometric case as well (see \cite{baik2001}), the random matrix connection, as far as we understand, exists only in the Laguerre limit. While it is possible that using the formula of \cite{baik2001} one can obtain a result analogous to Theorem \ref{t:p2l} for geometric LPP, we are unaware of any such result, rendering our current arguments inapplicable in the geometric case. 

Finally, passage times in more general non-axial directions other than along the diagonal were  also considered in  \cite{ledoux}. That is, for any fixed $\gamma\in (0,\infty)$, a similar law of iterated logarithm was proved for the (properly centered) sequence $T_{n}^{\gamma}:=T_{\mathbf{1}, (n,\lfloor \gamma n \rfloor)}$. Since our proof relies on point-to-line estimates, it is not hard to see that the same proof verbatim also yields Theorem \ref{t:main} in this more general case. However we do not attempt to provide any details. \\

All that is left to be done now is prove Theorem \ref{t:main2} which is accomplished in the following section. 
\section{Lower Deviations in $\beta$-Laguerre ensemble: Proof of Theorem \ref{t:main2}}
\label{s:beta}
As mentioned earlier, for the proof of Theorem \ref{t:main2}, we shall rely on a tridiagonal matrix model for the $\beta$-Laguerre ensemble \cite{dumitriu2002matrix}. To define the tridiagonal matrix, we start by introducing some notation. We write $\chi^2_{r}$ for the Chi-square distribution with parameter $r$, and by an abuse of notation, also for a random variable having  this distribution. Its density is proportional to $x^{r/2-1}e^{-x/2}$ on $\R_+$ and it has expectation $r$. Similarly, we write  $\chi_{r}$ for the random variable (and the distribution) which is the positive square root of a $\chi^2_{r}$ variable. It has density  proportional to $x^{r-1}e^{-x^2/2}$ and its expectation is equal to 
\begin{equation}\label{expectation}
2^{1/2}\frac{\Gamma(r/2+1/2)}{\Gamma(r/2)}.
\end{equation}

We also recall  the well-known facts  (see e.g.\ \cite{LR09} for a reference)  that $\E \chi_{r}$ is increasing in $r$ for all $r>0$, $\E \chi_{r}\leq r^{1/2}$ (Jensen's inequality) and 
$\E \chi_{r}\geq \sqrt{r-1/2}$
for all $r\geq 1$. 

\bigskip 

Now fixing $m\ge n$ and $\bet\ge 1$,  let 
\begin{equation}\label{def}
X_{2k-1}=a_k, \text{ for } 1\le k\le n, \text{ and } X_{2k}=b_k, \text{ for }1\le k\le n-1
\end{equation}
 be independent random variables where $\bet a^2_k\sim \chi^2_{\bet(m+1-k)}$ and ${\bet} b^2_k\sim \chi^2_{\bet(n-k)}$.  

Given the above, we define the following matrices.
\begin{enumerate}
\item  Let $B=B_{\bet}$ be a $n\times n$ bi-diagonal matrix with $B_{k,k}=a_k$ and $B_{k+1,k}=b_k$.\\
\item Let $L=L_{\bet}=B B^T$, an $n\times n$ positive semi-definite matrix, where $B^T$ as usual denotes the transpose of $B$.\\
\item Let $M=\begin{bmatrix}
0 & B^T\\ B & 0 \end{bmatrix}$, a $2n\times 2n$ symmetric matrix. \\
\item Let $T$ be a $2n\times 2n$ symmetric tridiagonal matrix with zeros on the diagonal and $X=(X_1,\ldots ,X_{2n-1})$ on the super-diagonal and sub-diagonal, i.e. for each $i=1,2,\ldots, 2n-1,$ 
\begin{equation}\label{defT}
T_{i,i+1}=T_{i+1,i}=X_i.
\end{equation}
\end{enumerate}

\noindent
\textbf{Fact:}  The joint density of eigenvalues of $L$ is given by \eqref{e:betadensity}, the $\beta$-Laguerre ensemble, ${\rm{LE}}^{\beta}_{m,n}$. This is by now well known (see for example \cite[Theorem 3.1]{dumitriu2002matrix}). We shall, however, not be relying on this joint density.

We next state and prove a simple lemma relating the eigenvalues of $L$ and $M$ and $T.$
\begin{lemma} 
\label{l:corr}
If $L$ has eigenvalues $s_1^2 \le  \ldots \le s_n^2$ (since $L$ is positive semi-definite), then $M$ and $T$ have eigenvalues $\pm s_1,\pm s_2,\ldots ,\pm s_n$.
\end{lemma}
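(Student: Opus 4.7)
The plan is to prove the lemma in two steps: (i) identify the eigenvalues of $M$ with $\pm s_i$ via a standard SVD-style linearisation, and (ii) show $T$ is orthogonally similar to $M$ through a simple permutation.

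For (i), since $L=BB^T$ is positive semi-definite with eigenvalues $s_i^2$, the singular values of the square matrix $B$ are exactly $\{s_i\}_{i=1}^n$. For each strictly positive $s_i$ one takes $v_i$ with $L v_i = s_i^2 v_i$ and sets $u_i:=B^T v_i/s_i$, so that $B u_i = s_i v_i$ and $B^T v_i = s_i u_i$. Then the block form of $M$ gives directly
\[
M\begin{pmatrix} u_i \\ \pm v_i \end{pmatrix} = \pm s_i \begin{pmatrix} u_i \\ \pm v_i \end{pmatrix},
\]
exhibiting $\pm s_i$ as eigenvalues. For zero singular values one uses that $B$ is square, so $\dim\ker(B)=\dim\ker(B^T)$ equals the multiplicity of the zero eigenvalue of $L$, and pairs vectors from the two kernels to fill up the remaining eigenvectors of $M$ with eigenvalue $0$. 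A dimension count then confirms that all $2n$ eigenvalues of $M$ are accounted for and equal $\pm s_1,\ldots,\pm s_n$.

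For (ii), I would consider the permutation $\sigma$ of $\{1,\ldots,2n\}$ defined by $\sigma(k)=2k-1$ for $1\le k\le n$ and $\sigma(n+k)=2k$ for $1\le k\le n$, with associated permutation matrix $P$. A direct check using $(PTP^T)_{ij}=T_{\sigma(i),\sigma(j)}$ shows that the two diagonal $n\times n$ blocks of $PTP^T$ vanish (since any two odd, or any two even, indices differ by at least $2$ and the tridiagonal structure of $T$ forces such entries to be zero), while the off-diagonal blocks collect precisely the entries $T_{2k-1,2k}=a_k$ and $T_{2k,2k+1}=b_k$, yielding the matrix $B$ in one block and $B^T$ in the other (by symmetry of $T$). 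Thus $PTP^T$ has the same block form as $M$ up to swapping the two off-diagonal blocks, which is itself implemented by the orthogonal conjugation $\begin{pmatrix} 0 & I \\ I & 0 \end{pmatrix}$; hence $T$ and $M$ are orthogonally similar and share a spectrum.

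Combining (i) and (ii) gives the claim. The main bookkeeping to be careful about is the permutation step, verifying that $\sigma$ really does produce the block structure of $M$ and correctly identifies the off-diagonal block with $B$; the SVD linearisation in step (i) is standard.
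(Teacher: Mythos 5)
Your proof is correct, and the second half (identifying $T$ and $M$ by a permutation of indices) is essentially the same observation the paper makes, just run in the opposite direction: the paper permutes the rows and columns of $M$ in the order $n+1,1,n+2,2,\ldots$ to recover $T$, whereas you permute $T$ (and then fix up the resulting block swap with a conjugation by $\left(\begin{smallmatrix}0&I\\I&0\end{smallmatrix}\right)$). The genuine difference is in the first half. The paper computes the characteristic polynomial of $M$ directly via the Schur complement identity $\det(zI_{2n}-M)=\det(zI_n)\det\big(zI_n-\tfrac{1}{z}BB^T\big)=\det(z^2I_n-L)$, which immediately shows that the spectrum of $M$ is $\{\pm s_k\}$ and handles the zero eigenvalue with no special case (the identity between the two polynomials in $z$ extends from $z\neq 0$ to all $z$). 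You instead build eigenvectors of $M$ from the SVD of $B$, pairing $u_i=B^Tv_i/s_i$ with $v_i$ to get eigenvectors for $\pm s_i$, and then separately match up $\ker B$ and $\ker B^T$ for the zero eigenvalue and finish with a dimension count. Both routes are standard and sound; the determinant computation is shorter and avoids the kernel bookkeeping, while your eigenvector construction is more explicit and makes the spectral correspondence concrete, which can be useful if one later wants to transfer eigenvector information and not just eigenvalues.
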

\bprf  The characteristic polynomial of $M$ is $\det(zI_n)\det(zI_n-\frac{1}{z}BB^{T})=\det(z^2I_n-L)$. This shows that $M$ has eigenvalues $\pm s_k$. One can check easily that if we permute the rows and columns of $M$ in the order $n+1,1,n+2,2,n+3,3,\ldots$, then we get the matrix $T$. Thus $T$ has the same eigenvalues as $M$.
\eprf

Now note that using Lemma \ref{l:corr}, our Theorem \ref{t:main2} reduces to proving the following result. For some $c,c',c''>0$ and any $0<\eps<c'$ and all $\beta \ge 1$,
\begin{align}\label{eq:maingoal}
\P\{ s_n\le (\sqrt{m}+\sqrt{n})(1-\eps)\}\ge \begin{cases}
\exp\{-c\beta(\eps\sqrt{mn})^2\} & \mbox{ if }\eps\ge c''\frac{\sqrt{n}}{\sqrt{m}}, \\
C_0^{\beta}\cdot\exp\{-c\beta(\eps^{\frac{3}{2}}m^{\frac{3}{4}}n^{\frac{1}{4}})^2\} & \mbox{ if } 0<\eps\le c'' \frac{\sqrt{n}}{\sqrt{m}}.
\end{cases}
\end{align}

Note that Theorem~\ref{t:main2} is a statement about $s_n^2$, while \eqref{eq:maingoal} concerns $s_n$. There is no issue in making this change, except that $\eps$ changes by a factor of 2; this is safely absorbed in the constant $c$.

As mentioned earlier, the lower bound for the lower tail in the Laguerre case was not addressed in \cite{LR09}. The main reason why we are able to analyze it is that we do not use $BB^T$ (in which the entries are not independent and are sums of products of $\chi$ random variables with different parameters) but the matrix $T$ which has independent entries. Further, the matrix $T$ is very similar in appearance to the tridiagonal matrix for the Hermite model. However the proof in \cite{LR09} for the Hermite model uses in an essential way the Gaussians on the diagonal, while $T$ has zeros on the diagonal. Hence some modification is needed in the proof of the lower bound. This idea of {\em linearization} is often useful when working with the Laguerre ensembles, and has been used before (see for example, the appendix to \cite{TV}).

\medskip 

As the following proof is rather technical, before delving into it we provide a brief high level overview. 
The most natural idea would be to condition $X_{k}$s to be small, and indeed, that together with a simple approximation for the largest eigenvalue works for $\eps$ bounded away from $0$ (see Section \ref{s:gershgorin}). To treat all values of $\eps$ down do the fluctuation scale, one needs to estimate the eigenvalue more accurately.

We achieve this by an appropriate tilting argument. We do a change of measure changing $X_{k}$s to $Y_{k},$s (defined in \eqref{ydef}), so that for the tridiagonal matrix  obtained in \eqref{defT} by replacing the $X_k$s by the $Y_{k}$s, the largest eigenvalue is typically smaller than $(\sqrt{m}+\sqrt{n})(1-\eps)$. 

The sought lower bound is then obtained by lower bounding the Radon-Nikodym derivative of $X_{k}$s with respect to $Y_{k}$s.  To achieve the first step, recalling $\lambda_n =\max\limits_{\|w\|=1}Q(w)$, we shall define a quadratic form $Q_{b}$ (that arises naturally by approximating $\E Q$ and completing squares, see below) with $Q_b\geq Q$ and estimate $\max\limits_{\|w\|=1}Q_b(w)$ instead. It turns out (Lemma \ref{lem:lbdlefttail}) there exists a change of measure from $X_k$s to $Y_k$s which is simply scaling a number of $X_k$s by a factor of $\sqrt{1-\eps}$ that gives the lower bound with the right exponents. We now start with the details.

\subsection{The quadratic forms and their comparison}
The quadratic form corresponding to $T-(\sqrt{m}+\sqrt{n})I_{2n}$ is 
\begin{align}\label{quaddef}
Q(w)=2\sum_{k=1}^{2n-1}X_kw_kw_{k+1} - (\sqrt{m}+\sqrt{n})\sum_{k=1}^{2n}w_k^2.
\end{align}
where $w=(w_1,w_2,\ldots, w_{2n}).$
Let  $\hat X_k = X_k-\E[X_k]$, and for $b>0$  define the idealized quadratic forms
\begin{align}\label{ideal}
Q_b(w)=2\sum_{k=1}^{2n-1}\hat{X}_kw_kw_{k+1}-b\sqrt{n}\sum_{k=0}^{n}(w_{2k}-w_{2k+1})^2-b\sqrt{m}\sum_{k=1}^{n}(w_{2k-1}-w_{2k})^2-\frac{b}{\sqrt{n}}\sum_{k=1}^{2n}kw_k^2,
\end{align}
where $w_0 = w_{2n+1}=0$ by convention.
If $Z_k$ are centered random variables, define $Q_b(w;Z)$ by the same expression as $Q_b$, except that $\hat{X}_k$ is replaced by $Z_k$. In particular, $Q_b(w)=Q_b(w,\hat{X})$. 

\medskip

We briefly describe the motivation for defining $Q_{b}(w)$ as above, which naturally arises from completing squares in $\E Q(w)$. Observe that we have 
$$\E Q(w):=2\sum_{k=0}^{n} \E X_{2k-1} w_{2k-1}w_{2k} + 2\sum_{k=1}^{n-1} \E X_{2k} w_{2k}w_{2k+1}-(\sqrt{m}+\sqrt{n})\sum_{k=1}^{2n}w_{k}^2.$$
Now, using the approximation (coming from \eqref{expectation}) $\E X_{2k-1} \approx \sqrt{m-k} \approx \sqrt{m}-\frac{k}{2\sqrt{m}}$ and $\E X_{2k} \approx \sqrt{n-k} \approx  \sqrt{n}-\frac{k}{2\sqrt{n}}$ and the identities $2w_{k}w_{k+1}=w_{k}^2+w_{k+1}^{2}-(w_{k}-w_{k+1})^2$ we get
$$\E Q(w) \approx -\sqrt{n}\sum_{k=0}^{n}(w_{2k}-w_{2k+1})^2-\sqrt{m}\sum_{k=1}^{n}(w_{2k-1}-w_{2k})^2-\left(\frac{1}{2\sqrt{n}}+\frac{1}{2\sqrt{m}}\right)\sum_{k=1}^{2n}kw_k^2.$$
It is now easy to see that (at least for $m=n$),
$$Q_{b}(w) \approx (Q(w)-\E Q(w))+ b\E Q(w)$$
and for $m>n$, $Q_{b}(w)$ is even larger. As the approximant of $\E Q(w)$ is negative, one could reasonably expect that for small $b$, $Q_{b}(w)$ would be larger than $Q(w)$. This is the content of the next lemma. 
\begin{lemma}\label{lem:quadformcomparison} {If $b$ is sufficiently small ($b< \frac14$ suffices), then $Q\le Q_b$.}
\end{lemma}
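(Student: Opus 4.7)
The plan is to prove that the quadratic form $Q_b - Q$ is pointwise non-negative on $\mathbb{R}^{2n}$. The crucial first observation is that although $Q$ and $Q_b$ each depend on the random variables $X_k$, their difference is deterministic: the random bilinear parts $2\sum_k X_k w_k w_{k+1}$ (in $Q$) and $2\sum_k \hat X_k w_k w_{k+1}$ (in $Q_b$) differ by exactly $2\sum_k \E[X_k]\,w_k w_{k+1}$, so
\begin{align*}
Q_b(w) - Q(w) = (\sqrt{m}+\sqrt{n})\|w\|^2 - 2\sum_{k=1}^{2n-1}\E[X_k]\,w_k w_{k+1} - b\sqrt{n}D_1 - b\sqrt{m}D_2 - \frac{b}{\sqrt{n}}\sum_{k=1}^{2n}k\,w_k^2,
\end{align*}
where $D_1 = \sum_{k=0}^n(w_{2k}-w_{2k+1})^2$ and $D_2 = \sum_{k=1}^n(w_{2k-1}-w_{2k})^2$ under the convention $w_0 = w_{2n+1} = 0$.

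Write $\mu_k := \E[a_k]$ and $\nu_k := \E[b_k]$; the bilinear sum is then $-2\sum_k\mu_k w_{2k-1}w_{2k} - 2\sum_k\nu_k w_{2k}w_{2k+1}$. Applying the algebraic identity $-2\mu_k w_{2k-1}w_{2k} = \mu_k(w_{2k-1}-w_{2k})^2 - \mu_k(w_{2k-1}^2+w_{2k}^2)$ (and its analogue for $\nu_k$), combining with the $-b\sqrt{m}D_2$ and $-b\sqrt{n}D_1$ summands, and collecting by monomial type yields
\begin{align*}
Q_b - Q &= \sum_{k=1}^n(\mu_k - b\sqrt{m})(w_{2k-1}-w_{2k})^2 + \sum_{k=1}^{n-1}(\nu_k - b\sqrt{n})(w_{2k}-w_{2k+1})^2 \\
&\quad - b\sqrt{n}(w_1^2 + w_{2n}^2) + \sum_{j=1}^{2n}\Bigl[\sqrt{m}+\sqrt{n} - \tau_j - \frac{bj}{\sqrt{n}}\Bigr] w_j^2,
\end{align*}
where $\tau_{2k-1} := \mu_k + \nu_{k-1}$ and $\tau_{2k} := \mu_k + \nu_k$ under the boundary convention $\nu_0 = \nu_n = 0$.

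To establish pointwise non-negativity I would use Jensen's inequality ($\mu_k \le \sqrt{m+1-k}$ and $\nu_k \le \sqrt{n-k}$) together with the concavity bound $\sqrt{a} - \sqrt{a-t} \ge t/(2\sqrt{a})$, which together lower-bound each diagonal coefficient $\sqrt{m}+\sqrt{n} - \tau_j - bj/\sqrt{n}$ by roughly $(k-1)/(2\sqrt{m}) + (k-1)/(2\sqrt{n}) - bj/\sqrt{n}$, plus boundary adjustments. When $\mu_k \ge b\sqrt{m}$ (the ``small-$k$'' regime), the squared-difference coefficient $\mu_k - b\sqrt{m}$ is non-negative and contributes positively, so may be safely discarded; in the complementary ``large-$k$'' regime (which can only occur when $k$ is close to $n$ and $n$ close to $m$), the bound $(w_{2k-1}-w_{2k})^2 \le 2(w_{2k-1}^2 + w_{2k}^2)$ transfers the negative contribution to the diagonal, where it is absorbed by the combined $\sqrt{m}$- and $\sqrt{n}$-Jensen slacks. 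A direct case analysis then confirms that $b < 1/4$ suffices. The main obstacle is carrying out the diagonal estimate uniformly in the ratio $m/n$: the regime $m \sim n$ is the most delicate, since for $k\approx n$ the Jensen gap $\sqrt{m+1-k}-\mu_k$, of order $1/(\beta\sqrt{m+1-k})$, becomes the dominant source of positivity, and the boundary contributions $-b\sqrt{n}(w_1^2 + w_{2n}^2)$ must be handled by a direct numerical check using $\mu_1 \le \sqrt{m}$ and $\mu_n \le \sqrt{m+1-n}$.
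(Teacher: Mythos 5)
Your proposal is correct and is essentially the paper's argument: the paper likewise observes that $Q_b-Q$ is a deterministic tridiagonal quadratic form and proves its positive semidefiniteness by diagonal dominance, which is exactly your sum-of-squares decomposition written implicitly, and the inequalities to be verified (Jensen's bound $\E[\chi_r]\le\sqrt{r}$, monotonicity of $r\mapsto\E[\chi_r]$, and $\sqrt{a}-\sqrt{a-t}\ge t/(2\sqrt{a})$) are the same. One small caveat: for odd indices $j=2k-1$ in the regime $m\gg n$, the estimates you outline only give the diagonal coefficient nonnegative when $b\le (k-1)/(2(2k-1))$, i.e.\ $b\le 1/6$ at $k=2$, rather than the advertised $b<1/4$; this is immaterial (any fixed small $b$ works downstream, and the paper's own treatment of the odd-index case has the same slack hidden in ``similar reasoning works for odd $k$'').
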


\bprf Observe that
\begin{align*}
Q_b(w)-Q(w) &= \sum_{k=1}^{2n}V_{k,k}w_k^2+2 \sum_{k=1}^{2n-1}V_{k,k+1}w_kw_{k+1}
\end{align*}
where $V_{k,k}=\sqrt{m}+\sqrt{n}-b\sqrt{m}-b\sqrt{n}-\frac{bk}{\sqrt{n}}$ and $V_{k,k+1}=-\E [X_k]+b\sqrt{m}$ for $k$ odd and  $V_{k,k+1}=-\E [X_k]+b\sqrt{n}$ for $k$ even.\\

That is, if we form the symmetric tridiagonal matrix $V$ with these entries, then $$(Q_b-Q)(w)=\langle Vw,w \rangle.$$ Thus, to show that $Q_b\ge Q$, it suffices to show that 
 $$V_{k,k}\ge |V_{k,k-1}|+|V_{k,k+1}|$$ for all $1\le k\le 2n$ (with the interpretation that $V_{1,0}=0$ and $V_{2n,2n+1}=0$). In fact, considering the different combinations of signs of $V_{k,k-1}$ and $V_{k,k+1}$,  it is sufficient if we have
\begin{align*}
\sqrt{m}+\sqrt{n}-b\sqrt{m}-b\sqrt{n}-\frac{bk}{\sqrt{n}} \ge \begin{cases} 
b\sqrt{m}+b\sqrt{n}-\E[X_{k-1}]-\E[X_k]; \\
\E[X_{k-1}]+\E[X_k]-b\sqrt{m}-b\sqrt{n}; \\ 
\E[X_{k-1}]-\E[X_k] -b \sqrt{m}+b\sqrt{n}&\mbox{if $k$ is even}\\
\E[X_{k-1}]-\E[X_k]-b \sqrt{n}+b\sqrt{m}&\mbox{if $k$ is odd}.\\
-\E[X_{k-1}]+\E[X_k] +b \sqrt{m}-b\sqrt{n}&\mbox{if $k$ is even}\\
-\E[X_{k-1}]+\E[X_k]+b \sqrt{n}-b\sqrt{m}&\mbox{if $k$ is odd}.\\
\end{cases}
\end{align*}
Notice that the left hand side is at least $\sqrt{m}(1-b)+\sqrt{n}(1-3b)$. By using the fact that the expectation of $\chi$ variables increases with its parameter, it follows that the contribution of the expectation terms is negative in the right hand side of the first, fourth and the fifth inequality. If we ignore these negative terms, what remains is at most $b\sqrt{m}+b\sqrt{n}$. Therefore, all three inequalities now follow by choosing $b\leq \frac{1}{4}$. Using $\E \chi_{\alpha}\leq \sqrt{\alpha}$ it also follows that the right hand sides of the third and the sixth inequalities are at most $(1-b)\sqrt{m}+b\sqrt{n}$, and these inequalities also follow by taking 
$b\leq \frac{1}{4}$. 

It remains to prove the second inequality. For this, assume that $k=2\ell$ (similar reasoning works for odd $k$) in which case, invoking the facts following \eqref{expectation} again, the right hand side is at most $\sqrt{m+1-\ell}+\sqrt{n-\ell}-b\sqrt{m}-b\sqrt{n}$. Since $\sqrt{m+1-\ell}\le \sqrt{m}$, all we need is that
\begin{align*}
\frac{2\ell b}{\sqrt{n}} &\le \sqrt{n}-\sqrt{n-\ell}  \; =\;  \frac{\ell}{\sqrt{n}+\sqrt{n-\ell}}. 
\end{align*}
The right hand side is at least $\frac{\ell}{2\sqrt{n}}$, hence the desired inequality is valid if  $b< \frac14$.
\eprf

\subsection{Exponential tail bound to the right}
We now prove a deviation inequality for the upper tail of $Q_{b}(\cdot, \cdot)$, which, via Lemma \ref{lem:quadformcomparison} also provides a bound on the upper deviation of $Q(\cdot, \cdot)$. A similar result was proved in \cite{LR09} for a different but related quadratic form on the way to prove an upper tail deviation inequality for the largest eigenvalue for Laguerre $\beta$-ensemble (see \cite[Section 3.2]{LR09}).

\begin{lemma}
\label{lem:probboundsonQ1} 
Assume that $Z_k$ are independent random variables with zero mean and satisfying $\E[e^{\lam Z_k}]\le e^{c\lam^2}$ for all $k\le 2n$ and some $c>0$ and all $\lambda\in \R$. Then for any $0<\eps<1$, 
\begin{align}\label{boundlem}
\P\left\{\max\limits_{\|w\|=1}Q_b(w;Z)\ge \eps\sqrt{m} \right\}\le Ce^{-c'\eps^{3/2}\sqrt{mn}(\frac{1}{\sqrt{\eps}} \wedge (\frac{m}{n})^{1/4})}
\end{align}
where $c',C>0$ depends only on $c$ and $b$.
\end{lemma}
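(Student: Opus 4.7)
The plan is to follow the moment-generating-function / Chernoff strategy of \cite[Section 3.2]{LR09}, which treats an analogous quadratic form arising from the Hermite tridiagonal model. The starting point is that $\max_{\|w\|=1}Q_b(w;Z)$ is an optimization over the unit sphere of an expression linear in $Z$ with a non-positive deterministic part $D(w)$, so Chernoff-type bounds are natural.

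For a fixed unit vector $w$, writing $Q_b(w;Z)=2\sum_k Z_k w_k w_{k+1}+D(w)$ with $D(w)\le 0$ the deterministic penalty, independence of the $Z_k$ together with the sub-Gaussian hypothesis $\E[e^{\lambda Z_k}]\le e^{c\lambda^2}$ yields the one-$w$ MGF bound
\begin{equation*}
\E[e^{\lambda Q_b(w;Z)}]\le \exp\Bigl\{\lambda D(w)+4c\lambda^2\sum_k w_k^2 w_{k+1}^2\Bigr\},
\end{equation*}
which, after Chernoff optimization in $\lambda$, gives the pointwise tail estimate $\P\{Q_b(w;Z)\ge \eps\sqrt m\}\le \exp\{-(\eps\sqrt m+|D(w)|)^2/(16c\sum_k w_k^2 w_{k+1}^2)\}$.

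The core of the argument is upgrading this pointwise estimate to a uniform estimate over the unit sphere of $\R^{2n}$ without paying the prohibitive $e^{Cn}$ entropy of a crude net. This is where the structure of $D(w)$ enters: the diagonal penalty $-(b/\sqrt n)\sum_k kw_k^2$ forces any $w$ with $|D(w)|\le M$ to have roughly half its $\ell^2$-mass in the first $K_0\asymp M\sqrt n/b$ coordinates, and the smoothness penalty $-b\sqrt m\sum_k (w_{2k-1}-w_{2k})^2$ (together with the analogous $\sqrt n$ term) forces $w$ to be essentially constant on that range, so that $\sum_k w_k^2 w_{k+1}^2\lesssim 1/K_0$. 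Slicing the sphere dyadically by $|D(w)|\asymp M$ then reduces each slice to an effective $K_0(M)$-dimensional sphere, which admits a net of cardinality $e^{CK_0(M)}$, and the per-$w$ Chernoff decay inside the slice is $\exp\{-c(\eps\sqrt m+M)^2 K_0(M)\}$.

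Assembling these pieces and optimizing $M$ (equivalently, the scale $K_0$) against $\eps$ produces the two regimes of the stated bound: for $\eps\lesssim \sqrt{n/m}$ one finds the optimum at $K_0\asymp (mn)^{1/6}$, reproducing the Tracy-Widom exponent $\eps^{3/2}m^{3/4}n^{1/4}$, while for $\eps\gtrsim \sqrt{n/m}$ the optimum is driven by $\eps\sqrt m$ rather than $M$ and yields the Gaussian exponent $\eps\sqrt{mn}$. The main obstacle is precisely this entropy-versus-decay tradeoff: the $e^{CK_0}$ net cost must be absorbed by the Chernoff decay $e^{-c(\eps\sqrt m+|D(w)|)^2 K_0}$, and this works only because, at the optimal scale, the penalty $|D(w)|$ is forced to be large enough to kill the net entropy. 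This is the same mechanism as in \cite{LR09}; the only real modification is to account for the $\sqrt m$-vs-$\sqrt n$ asymmetry in the Laguerre-derived penalty $D(w)$, which is what produces the asymmetric $\tfrac{1}{\sqrt\eps}\wedge(m/n)^{1/4}$ factor in the final exponent.
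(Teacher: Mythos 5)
Your pointwise Chernoff bound for a fixed $w$ is correct, but the uniformization step is where the argument breaks, and it breaks precisely in the moderate-deviation regime that the paper needs. Take $m=n$ and $\eps=xn^{-2/3}$ with $x$ of order one (or of order $(\log\log n)^{1/3}$, as in the application). The two penalties in $D(w)$ force a near-extremal $w$ to be spread flatly over the first $K_0\asymp (mn)^{1/6}=n^{1/3}$ coordinates with $|D(w)|\asymp m^{1/6}n^{-1/3}=n^{-1/6}$, so your per-point exponent is $(\eps\sqrt m+|D(w)|)^2\big/\sum_k w_k^2w_{k+1}^2\asymp \bigl((x+1)n^{-1/6}\bigr)^2\cdot n^{1/3}=(x+1)^2$: each point of the net is excluded only with \emph{constant} probability $e^{-c(x+1)^2}$, while the net over the relevant slice has cardinality $e^{CK_0}=e^{Cn^{1/3}}$. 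The union bound then gives $e^{Cn^{1/3}-c(x+1)^2}$, which is vacuous. Your own closing condition --- that the penalty kills the net entropy --- amounts to requiring $(\eps\sqrt m+M)^2\gtrsim 1$, and this fails throughout $\eps\lesssim m^{-1/2}$ (when $m\le n^2$), in particular in the whole regime $\eps\asymp n^{-2/3}$ where the lemma must yield a constant-order bound. No choice of slicing scale repairs this: a supremum over exponentially many net points, each failing with probability bounded below by a constant, cannot be handled by a union bound.

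The paper (and \cite[Section 3.2]{LR09}, which you have mischaracterized --- it does not use a net either) sidesteps the entropy problem entirely via summation by parts: setting $S_k=Z_1+\cdots+Z_k$ and block oscillations $\Del_p(k)=\max_{1\le j\le p}|S_{k+j}-S_k|$, one bounds $\sum_k Z_kw_kw_{k+1}$ \emph{deterministically, for all unit $w$ simultaneously on the same realization of $Z$}, by an expression in the $\Del_p(k)$, absorbing the cross terms $w_{k+1}(w_{k+2}-w_k)$ into the smoothness penalty by Cauchy--Schwarz. This dominates $\max_{\|w\|=1}Q_b(w;Z)$ by a maximum of only $O(n/p)$ random quantities, each controlled by Doob's maximal inequality for the sub-Gaussian partial sums; optimizing the block length $p\asymp\eps^{-1/2}m^{-1/4}n^{1/4}$ produces both regimes of the stated exponent with only a polynomial union-bound cost. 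If you insist on a sphere-based route you would need genuine multi-scale chaining for this sub-Gaussian chaos restricted to the slices, which is a substantially different and harder argument than the single-scale net you have sketched.
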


\bprf
Define $S_k=Z_1+\ldots +Z_k$ for $1\le k\le 2n$ (and $S_0=0$ and $S_k=S_{2n}$ for $k>2n$). For $p\ge 1$ define $$\Del_p(k)=\max\{|S_{k+j}-S_k| : 1\le j\le p\}.$$

By the summation by parts formula ( \cite[Lemma 8]{LR09}), we have for any unit vector $w$, 
\begin{align}
\label{eq:secondtofirstdifference}
\sum_{k=1}^{2n-1}Z_{k}w_{k}w_{k+1} &=& \sum_{k=0}^{2n-2}\frac{1}{p}[S_{k+p}-S_{k}]w_{k+1}w_{k+2}+\sum_{k=0}^{2n-1}\left(\frac{1}{p}\sum_{\ell=k}^{k+p-1}[S_{\ell}-S_{k}]\right)w_{k+1}(w_{k+2}-w_{k}) \nonumber \\
&\le & \frac{1}{2p}\sum_{k=0}^{2n-2}\Del_{p}(k)(w_{k+1}^{2}+w_{k+2}^{2})+\frac{2}{b\sqrt{n}}\sum_{k=0}^{2n-1}\Del_{p}(k)^{2}w_{k+1}^{2}+\frac{b\sqrt{n}}{8}\sum_{k=0}^{2n}(w_{k+2}-w_{k})^{2} \nonumber \\
&\le & \frac{1}{2p}\sum_{k=0}^{2n-2}\Del_{p}(k)(w_{k+1}^{2}+w_{k+2}^{2})+\frac{2}{b\sqrt{n}}\sum_{k=0}^{2n-1}\Del_{p}(k)^{2}w_{k+1}^{2}+\frac{1}{2}b\sqrt{n}\sum_{k=0}^{2n}(w_{k+1}-w_k)^2 
\end{align}
where for the first inequality we bound all $S_{\ell}-S_{k}$ terms by $\Del_{p}(k)$ and then used Cauchy-Schwarz in the form 
\begin{align}
|\Del_{p}(k)w_{k+1}(w_{k+2}-w_{k+1})|\le \frac{1}{4\lam}\Del_{p}(k)^{2}w_{k+1}^{2} + \lam(w_{k+2}-w_{k})^{2}, \;\;\; \mbox{ with }\lam=\frac18 b\sqrt{n}.
\end{align}
To see the second inequality in \eqref{eq:secondtofirstdifference}, write $$(w_{k+2}-w_{k})^{2}\le 2(w_{k+1}-w_{k})^{2}+2(w_{k+2}-w_{k+1})^{2},$$ to see that the last term is at most $\frac{1}{2}b\sqrt{n}\sum_{k=0}^{2n}(w_{k+1}-w_k)^2$. \\

Now recalling the definition $$Q_b(w; Z)=2\sum_{k=1}^{2n-1}Z_kw_kw_{k+1}-b\sqrt{n}\sum_{k=0}^{n}(w_{2k}-w_{2k+1})^2-b\sqrt{m}\sum_{k=1}^{n}(w_{2k-1}-w_{2k})^2-\frac{b}{\sqrt{n}}\sum_{k=1}^{2n}kw_k^2,$$
and plugging in the above upper bound for $\sum_{k=1}^{2n-1}Z_kw_kw_{k+1},$ together with $m\geq n$ we obtain 
%
\begin{align}
Q_{b}(w;Z) &\le  \frac{1}{p}\sum_{k=0}^{2n-2}\Del_{p}(k)(w_{k+1}^{2}+w_{k+2}^{2}) + \frac{4}{b\sqrt{n}}\sum_{k=0}^{2n-1}\Del_{p}(k)^{2}w_{k+1}^{2} - \frac{b}{\sqrt{n}}\sum_{k=1}^{2n}kw_{k}^{2} \nonumber \\
&= \sum_{k=0}^{2n-1}w_{k+1}^{2}\left[\frac{1}{p}(\Del_{p}(k)+\Del_{p}(k-1)) + \frac{4}{b\sqrt{n}}\Del_{p}(k)^{2}- \frac{b(k+1)}{\sqrt{n}}\right] \nonumber \\
&\le \max_{0\le k\le 2n-1} \left[\frac{1}{p}(\Del_{p}(k)+\Del_{p}(k-1)) + \frac{4}{b\sqrt{n}}\Del_{p}(k)^{2}- \frac{bk}{\sqrt{n}} \right] \nonumber
\end{align}
since $\sum_{k}w_{k}^{2}=1$ and  we define {$\Del_{p}(-1)=0$}.
Now, for $k\in [(j-1)p+1,jp]$ and any $i$, since $$|S_{k+i}-S_k|\le |S_{k}-S_{(j-1)p}|+|S_{k+i}-S_{(j-1)p}|$$ as in \cite{LR09} we can write,
\begin{align*}
\Del_{p}(k)\vee \Del_{p}(k-1) \le 2\Del_{2p}((j-1)p),
\end{align*}
and it can separately be verified that the above inequality also holds for the case $k=0$ and $j=1$.
Therefore, 
\begin{equation}
Q_{b}(w; Z) \leq \max_{1\leq j\leq \lceil 2n/p \rceil} \left[\frac{4}{p}\Delta_{2p}\left((j-1)p\right) + \frac{16}{b\sqrt n}\Delta_{2p}\left((j-1)p\right)^2 - \frac{b(j-1)p}{\sqrt n}\right].\label{eq:bddintermsofpartialsums}
\end{equation}
Thus it follows that,
\begin{align*}
\P\left(\max_{\|w\|=1} Q_b(w; Z)\geq \varepsilon\sqrt m\right) 
 &\leq \sum_{j=1}^{2n/p} \P\left(\frac{4}{p}\Delta_{2p}\left((j-1)p\right)- \frac{b(j-1)p}{2\sqrt n}
 \geq  \frac{\varepsilon\sqrt m}{2}
 \right)\\
 &\quad + \sum_{j=1}^{2n/p} \P\left(\frac{16}{b\sqrt n}\Delta_{2p}\left((j-1)p\right)^2 - \frac{b(j-1)p}{2\sqrt n}
 \geq  \frac{\varepsilon\sqrt m}{2}
 \right),\\
&\leq \sum_{j=1}^{2n/p} \P\left(\Delta_{2p}\left((j-1)p\right) \geq \frac18\varepsilon p\sqrt m + \frac{b(j-1)p^2}{8\sqrt n}\right)\\
 &\quad + \sum_{j=1}^{2n/p}\P\left(\Delta_{2p}\left((j-1)p\right)^2 \geq \frac{1}{32}b\varepsilon \sqrt{mn} + \frac{b^2(j-1)p}{32}\right).
\end{align*}
Using the assumption about the exponential moments of the $Z_k$, and applying Doob's maximal inequality, we see that for any $k\ge 1$ and any $t>0$, by choosing $\lambda=\frac{t}{p}$
\begin{align*}
\P\{\Delta_{2p}(k) \ge t\}\le e^{-c't^2/p}.
\end{align*}
Thus we get
$$\P\left(\Delta_{2p}\left((j-1)p\right) \geq \frac18\varepsilon p\sqrt m + \frac{b(j-1)p^2}{8\sqrt n}\right) \leq \exp\left(-c'\varepsilon^2pm - \frac{c'b^2(j-1)^2p^3}{n}\right)$$
and
$$\P\left(\Delta_{2p}\left((j-1)p\right)^2 \geq \frac{1}{32}b\varepsilon \sqrt{mn} + \frac{b^2(j-1)p}{32}\right) \leq \exp\left(-\frac{c'b\varepsilon \sqrt{mn}}{p}-c'b^2(j-1)\right).$$
The sum of the first bound over $j=1, \ldots, \lceil 2n/p \rceil$ is upper bounded by
$$C\frac{n^{1/2}}{p^{3/2}} e^{-c'\varepsilon^2 p m},$$
while the sum of the second bound is upper bounded by
$$C e^{-c'b\varepsilon \sqrt{mn}/p},$$
using that $\exp\left(-c'b^2(j-1)\right)$ is summable and bounded independent of $m, n$ and $p$. We now set $p=\max(\lfloor \varepsilon^{-1/2}m^{-1/4}n^{1/4}\rfloor ,1)$. Observe that if $p>1$, this yields an overall bound of 
$$C(\eps^{3/2} m^{3/4}n^{1/4})^{1/2}e^{-c' \eps^{3/2}m^{3/4}n^{1/4}}+C e^{-c' \eps^{3/2}m^{3/4}n^{1/4}} \leq C e^{-c' \eps^{3/2}m^{3/4}n^{1/4}}$$
by increasing the constant $C$ and reducing the constant $c'$ suitably. On the other hand, if $p=1$, i.e., if $\eps \ge \frac{\sqrt{n}}{
\sqrt{m}}$, we get, using $\eps^2 m \geq n$, an overall bound of
$$Ce^{-c' \eps^{3/2}m^{3/4}n^{1/4}}+C e^{-c' \eps \sqrt{mn}}.$$
Combining the two cases we get
$$\P\left(\max_{\|w\|=1} Q_b(w; Z)\geq \varepsilon\sqrt m\right) \leq Ce^{-c'\eps^{3/2}\sqrt{mn}(\frac{1}{\sqrt{\eps}} \wedge (\frac{m}{n})^{1/4})};
$$
as desired.
\eprf

\berk \label{chi is subgaussian}For later purposes, we note that if $Y\sim \chi_{\nu}$ and $\hat{Y}=Y-\E[Y]$ then $\E[e^{\lam \hat{Y}}]\le e^{\frac12 \lam^2}$ for all $\lam>0$ and for all $\nu>0$. This is the content of \cite[Lemma 9]{LR09}. Hence, if $Z_k$s are independent centred $\chi$-random variables (not necessarily identically distributed), then Lemma~\ref{lem:probboundsonQ1} is applicable.
\eerk 

\berk
\label{r:uppertail}
It can be checked using Remark \ref{chi is subgaussian} and definition of $X_k$s that $Z_{k}=\hat{X}_{k}$ we have $\E[e^{\lam Z_{k}}]\le e^{c \lam^2/\beta}$ for all $\lam\in \R$ and all $k$. Tracking the dependence of $\beta$ throughout the calculations, it follows that 
$$\P\left(\max_{\|w\|=1} Q_b(w; Z)\geq \varepsilon\sqrt m\right) \leq Ce^{-c'\beta\eps^{3/2}\sqrt{mn}(\frac{1}{\sqrt{\eps}} \wedge (\frac{m}{n})^{1/4})};
$$
where $C,c'$ do not depend on $\beta$. Clearly, since $\beta\geq 1$ the $\beta$-term can simply be dropped from the exponent to get a uniform upper bound. Using Lemma \ref{lem:quadformcomparison}, one also gets an upper bound for $\P(\lambda_{n}\geq (1+\eps)(\sqrt{m}+\sqrt{n})^2)$ of the form $ Ce^{-c'\beta\eps^{3/2}\sqrt{mn}(\frac{1}{\sqrt{\eps}} \wedge (\frac{m}{n})^{1/4})}$. This recovers the first item of \cite[Theorem 2]{LR09} with possibly different absolute constants.
\eerk

\subsection{Lower bound for the left tail}
We are now in a position to prove  \eqref{eq:maingoal}.
Since $s_{n}-(\sqrt{m}+\sqrt{n})=\max_{\|w\|=1}Q(w)$ and {$Q\leq Q_{b}$ for small enough $b$} (by  Lemma~\ref{lem:quadformcomparison}), we have that
\begin{equation}\label{comparison}
\P\Big(s_n < (\sqrt m + \sqrt n)(1-\varepsilon)\Big) \geq \P\left(\max_{\|w\|=1} Q_b(w) \leq -\varepsilon(\sqrt m+\sqrt n)\right),
\end{equation}
and so the following result implies \eqref{eq:maingoal}. Note that we can replace $\eps(\sqrt{m}+\sqrt{n})$ by $\eps\sqrt{m}$ as our probability estimates are not sharp enough to differentiate $\eps$ from $2\eps$. 

\begin{lemma}\label{lem:lbdlefttail} Fix $0<b<1/4$. Then there are constants constant $c, C_0>0$ and $m_0\in \N$ such that for all $m\ge m_0\vee n$ and all $\beta \geq 1$
\begin{align} 
\P\l\{\max_{\|w\|=1}Q_b(w)\le -\eps\sqrt{m}\r\}\ge \begin{cases}
\exp\{-c\beta(\eps\sqrt{mn})^2\} & \mb{ if } \frac{b\sqrt{n}}{20\sqrt{m}} \leq \eps\le \frac{b}{2}, \\
C_0^\beta\cdot\exp\{-c\beta(\eps^{\frac{3}{2}}m^{\frac{3}{4}}n^{\frac{1}{4}})^2\} & \mb{ if } 0<\eps\le  \frac{2b\sqrt{n}}{\sqrt{m}}\wedge \frac{b}{2}.
\end{cases}
\end{align}
\end{lemma}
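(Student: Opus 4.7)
The plan is a change-of-measure argument adapted to the tridiagonal model. Each $X_k$ has $P$-density $\propto x^{\rho_k-1}e^{-\beta x^2/2}$, with $\rho_k=\beta(m+1-k')$ or $\beta(n-k')$ according as $k=2k'-1$ or $k=2k'$. Introduce a new measure $Q$ under which, for $k$ in a chosen index set $S\subseteq\{1,\ldots,2n-1\}$, $X_k$ is replaced by $Y_k:=\sqrt{1-\eta}\,X_k$, while $X_k$ for $k\notin S$ is left unchanged. A direct calculation with the $\chi$-densities yields the likelihood ratio
\[
\frac{dP}{dQ}=\prod_{k\in S}(1-\eta)^{\rho_k/2}\exp\!\Bigl(\tfrac{\beta\eta}{2(1-\eta)}X_k^{2}\Bigr).
\]
Since $\E_Q[X_k^{2}]=(1-\eta)\rho_k/\beta$ on $S$, typical $Q$-samples satisfy $\log(dP/dQ)\approx -\tfrac14\eta^{2}\sum_{k\in S}\rho_k$; the sub-leading $O(\beta)$ correction from Gaussian-scale fluctuations of $\sum_{k\in S}X_k^{2}$ around its mean contributes the $C_0^{\beta}$ prefactor in case (ii) (in case (i) the main exponent is always $\gtrsim\beta$ and absorbs this correction into the constant $c$).

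The second step is to show $\P_Q\bigl(\max_{\|w\|=1}Q_b(w)\le -\eps\sqrt m\bigr)\ge 1/2$ for an appropriate $(S,\eta)$. Since $X_k\ge 0$ one may restrict to $w\ge 0$. Writing $X_k=\hat X_k^{Q}+\E_Q X_k$ with $\E_Q X_k=\sqrt{1-\eta}\,\E_P X_k$ on $S$, one decomposes
\[
Q_b(w)=2\sum_{k=1}^{2n-1}\hat X_k^{Q}\,w_kw_{k+1}\;-\;2(1-\sqrt{1-\eta})\!\sum_{k\in S}\E_P[X_k]\,w_kw_{k+1}\;-\;\bigl(\text{the three quadratic penalties of $Q_b$}\bigr).
\]
A combined drift--penalty estimate (discussed below) gives $\E_Q Q_b(w)\le -2\eps\sqrt m$ uniformly in unit $w\ge 0$. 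The remaining centered piece $2\sum_k\hat X_k^{Q}w_kw_{k+1}$ is then controlled by Lemma~\ref{lem:probboundsonQ1} with $Z_k=\hat X_k^{Q}$, whose uniform sub-Gaussianity is given by Remark~\ref{chi is subgaussian}; in the $\eps$-ranges of interest, the resulting tail bound is at most $1/2$.

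The two cases of the lemma correspond to two choices of $(S,\eta)$. In case (i) ($\eps\ge b\sqrt n/(20\sqrt m)$) take $S=\{1,\ldots,2n-1\}$ and $\eta\asymp\eps$; a direct summation of the two arithmetic progressions gives $\sum_{k\in S}\rho_k=\beta mn$, and so the cost is $\exp(-c\beta\eps^2 mn)$, matching the first line of the lemma. In case (ii) (small $\eps$) take $S=\{1,\ldots,2K\}$ with $K\asymp\eps\sqrt{mn}/b$ and again $\eta\asymp\eps$; then $\sum_{k\in S}\rho_k\asymp\beta Km\asymp\beta\eps\, m^{3/2}n^{1/2}/b$, producing cost $\exp(-c\beta\eps^3 m^{3/2}n^{1/2})$, together with the $C_0^{\beta}$ prefactor from the preceding paragraph.

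The principal obstacle is the drift--penalty estimate in case (ii). The tilt touches only the first $2K$ coordinates, so the $Q$-drift on a unit $w\ge 0$ is at least $c\eta(\sqrt m+\sqrt n)\alpha$ where $\alpha:=\sum_{k\le 2K}w_k^{2}$; this is too weak for $w$ whose mass lies mainly at $k>2K$. The rescue is the penalty $-(b/\sqrt n)\sum_k k w_k^{2}$ built into $Q_b$: writing $\gamma:=1-\alpha$, we have $\sum_k k w_k^{2}\ge 2K\gamma$, so this penalty contributes at most $-(2Kb/\sqrt n)\gamma\asymp -b\eps\sqrt m\cdot\gamma$ (using $K\asymp\eps\sqrt{mn}/b$). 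Drift plus penalty therefore dominate a constant multiple of $\eps\sqrt m\cdot(\alpha+\gamma)=\eps\sqrt m$ uniformly in $w\ge 0$ of unit norm, which closes the argument. The analogous step for the Hermite ensemble in \cite{LR09} used the independent Gaussian diagonal, unavailable here; the linearization of Lemma~\ref{l:corr}, which produces the off-diagonal $\chi$-structure with independent entries, is what makes the present argument possible.
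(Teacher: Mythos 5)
Your proposal follows essentially the same route as the paper's proof: tilt a prefix of the $\chi$-variables by a factor $\sqrt{1-\eps}$ (with $K\asymp \eps\sqrt{mn}/b$ in the small-$\eps$ case, all of them otherwise), lower bound the Radon--Nikodym derivative by $\exp\{-c\beta\eps^2\sum_{k\in S}\rho_k\}$, and show the tilted $Q_b$ stays below $-\eps\sqrt m$ with probability bounded away from zero by combining the mean shift with the built-in penalty terms and the sub-Gaussian bound of Lemma~\ref{lem:probboundsonQ1}; even the use of the $\frac{b}{\sqrt n}\sum_k kw_k^2$ penalty to control mass beyond index $2K$ is identical. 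Two local points need repair, though neither changes the architecture. First, the claim that the drift alone is at most $-c\eta(\sqrt m+\sqrt n)\alpha$ is false as stated: the drift is a multiple of $-\sum_{k\in S}\E[X_k]\,2w_kw_{k+1}$, which vanishes for unit vectors supported on alternate coordinates inside $S$ even though $\alpha=1$ there. One must write $2w_kw_{k+1}=w_k^2+w_{k+1}^2-(w_k-w_{k+1})^2$ and absorb the resulting $+c\eta\sqrt m(w_k-w_{k+1})^2$ terms into the penalties $b\sqrt m\sum(w_{2k-1}-w_{2k})^2$ and $b\sqrt n\sum(w_{2k}-w_{2k+1})^2$; this is precisely where the hypothesis $\eps\le b/2$ enters, and is how the paper proceeds. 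Second, the reduction to $w\ge 0$ is neither justified (the centered term $2\sum_k\hat X_kw_kw_{k+1}$ and the mean-shift term respond in opposite directions to $w\mapsto|w|$) nor needed once the square-completion above is in place. The one genuinely different detail is the likelihood-ratio bound: you account for its Gaussian-scale fluctuations to produce the $C_0^\beta$ prefactor, whereas the paper restricts to the one-sided event $\{\sum_k t_{2k-1}\ge(1-\eps)\sum_k(m+1-k)\}$, of probability at least $0.4$ by a CLT for Gamma variables, on which $dP/dQ$ is deterministically bounded below (and obtains the $C_0^\beta$ prefactor instead from the degenerate regime $\eps\lesssim m^{-1/2}n^{-1/6}$ by monotonicity); both devices are sound.
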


{Observe that Lemma \ref{lem:lbdlefttail} completes the proof of Theorem \ref{t:main2} except for the case $m\leq m_0$. However, for $m\leq m_0$, $s_n< \sqrt{m}+\sqrt{n}-\eps \sqrt{m}$ can simply be ensured by making all (non-zero) entries of the matrix $T$ sufficiently small. Considering the density of $\chi$ random variables, it is easy to check that the probability of such and event is lower bounded by $C_0^{\beta m n}\geq C_0^{\beta m_0^2}$ for some (possibly different) constant $C_0$ (depending on $b$ and $m_0$). This takes care of the remaining case and completes the proof of Theorem \ref{t:main2}.}

\begin{proof}[Proof of Lemma \ref{lem:lbdlefttail}]
We first assume {$\varepsilon \geq \varepsilon_0 := C_1m^{-1/2}n^{-1/6}$}, for a $C_1$ which will be taken to be a sufficiently large absolute constant, chosen appropriately later. We shall choose $m$ sufficiently large so that $\varepsilon_0$ will be much smaller that $b/2$. We further divide into two cases depending on the range of $\varepsilon$.
\medskip

\noindent\textbf{Case 1:} $\eps\le \frac{2b\sqrt{n}}{\sqrt{m}}\wedge \frac{b}{2}$.  In this case we fix $K=\lceil \eps\sqrt{mn}/4b \rceil \le \frac{n+1}{2}$. Let $Y_{k}$ be independent random variables such that 
\begin{equation}\label{ydef}
Y_{2k-1}^2\stackrel{d}{=} {(1-\eps)} \ X^2_{2k-1} \sim \mb{Gam}\left(\frac{\beta(m+1-k)}{2},\frac{\beta}{2(1-\eps)}\right)
\footnote{$\mb{Gam}(\alpha,\kappa)$ denotes the Gamma distribution with density proportional to $x^{\alpha-1}e^{-\kappa x}$.}
\end{equation}
 for $1\le k\le K$, while  $Y_k^2\stackrel{d}{=} X_k^2$ for all other $k\le 2n-1$. 
 The constraint on $\eps$ was used to ensure that $K<n$ (otherwise the definition of $Y_k$ does not make sense). In fact, we have that $K\leq (n+1)/2$, and so recalling \eqref{def}, parameters of the $\chi_{\beta(m+1-k)}^2$ distributions of $X_k$ are all at least $\beta m/2$ for $k\le K$.\\

We adopt the following perturbative strategy: Using $Y_k$ in place of $X_k$ in the definition of $Q_b$, implies that $Q_b(w)\le -\eps\sqrt{m}$ for all unit vectors $w$, with probability close to $1$. This is because the first $K$ of the $Y_{2k-1}$ random variables have a slightly reduced mean (by about $\eps\sqrt{m}$), as compared to $X_{2k-1}$. Then we compute the Radon-Nikodym derivatives of $X$ with respect to $Y$ to get a lower bound for the probability of the same event under the $X_k$. 

So replacing $X$ by $Y$, in \eqref{ideal}, consider
\begin{align}\label{replace}
\widetilde{Q}_{b}(w,Y)&:=2\sum_{k=1}^{2n-1}(Y_k-\E[X_k])w_kw_{k+1}\\
\nonumber
&\quad-b\sqrt{n}\sum_{k=0}^{n}(w_{2k}-w_{2k+1})^2-b\sqrt{m}\sum_{k=1}^{n}(w_{2k-1}-w_{2k})^2-\frac{b}{\sqrt{n}}\sum_{k=1}^{2n}kw_k^2 \\
\label{replace1}
&= Q_{b/2}(w,Z) + 2\sum_{k=1}^K(\E[Y_{2k-1}]-\E[X_{2k-1}])w_{2k-1}w_{2k} \\
\nonumber
& \qquad -\frac{b}{2}\sqrt{n}\sum_{k=0}^{n}(w_{2k}-w_{2k+1})^2-\frac{b}{2}\sqrt{m}\sum_{k=1}^{n}(w_{2k-1}-w_{2k})^2-\frac{b}{2\sqrt{n}}\sum_{k=1}^{2n}kw_k^2
\end{align} 
where $Z_k=Y_k-\E[Y_k]$.

Using Remark \ref{r:uppertail} (it is applicable since $Y_{k}$s are all scalar multiples of $X_k$s in distribution with a multiplication factor uniformly bounded from above and below), recalling the upper bound of  $e^{-c'\eps^{3/2}\sqrt{mn}(\frac{1}{\sqrt{\eps}} \wedge (\frac{m}{n})^{1/4})}$ from there, and our choice of 
$\varepsilon_0 := C_1m^{-1/2}n^{-1/6},$ setting $C_1$ large enough (independent of $\beta$) allows us to ensure that for all $\varepsilon\ge \varepsilon_0$, 
\begin{equation}\label{upperboundapp}
\P(Q_{b/2}(w,Z)\le \frac18\eps\sqrt{m})\ge \P(Q_{b/2}(w,Z)\le \frac18\eps_0\sqrt{m})\ge 0.9.
\end{equation}
To estimate $\widetilde Q_{b}(Y),$ in \eqref{replace1}
we will use 
\begin{equation}\label{diffsquare}
2w_{2k-1}w_{2k} =w_{2k}^2+w_{2k-1}^2-(w_{2k-1}-w_{2k})^2
\end{equation}
 Furthermore, using the distributional equality in \eqref{ydef}, it follows from \eqref{expectation} that
\begin{align}\label{meanseparation}
-\eps\sqrt{m}\le \E[Y_{2k-1}]-\E[X_{2k-1}]\le -\frac14 \eps \sqrt{m}
\end{align}
for all $1\leq k\leq K$. Putting these together, we have that with probability $0.9$ or more for all $w$,
\begin{align}\label{eq:ubdfortildeQ}
\widetilde{Q}_b(w,Y) &\le \frac18\eps\sqrt{m}-\frac14 \eps\sqrt{ m}\sum_{k=1}^{2K}w_{k}^2-\l(\frac{b}{2}-\eps\r)\sqrt{m}\sum_{k=1}^{n}(w_{2k-1}-w_{2k})^2 -\frac{2bK}{2\sqrt{n}}\sum_{k=2K+1}^{2n}w_k^2\\
\label{eq:ubdfortildeQ1}
&\leq \frac18\eps\sqrt{m}-\frac14 \eps\sqrt{m}\sum_{k=1}^{2K}w_{k}^2-\frac{1}{4}\eps \sqrt{m}\sum_{k=2K+1}^{2n}w_k^2 = -\frac{1}{8}\eps \sqrt{m}.\end{align}
%
The first inequality follows by considering \eqref{replace1}, and:
\begin{enumerate}
\item Using \eqref{upperboundapp} to bound $Q_{b/2}(w,Z).$
\item Using \eqref{diffsquare}, and \eqref{meanseparation} to the term $2\sum_{k=1}^K(\E[Y_{2k-1}]-\E[X_{2k-1}])w_{2k-1}w_{2k}$ to obtain the term $-\frac14 \eps\sqrt{m}\sum_{k=1}^{2K}w_{k}^2.$
\item Reduce the coefficient in the third term to $\frac{b}{2}-\eps$ for all $k.$
\item Dropping the term $-\frac{b}{2}\sqrt{n}\sum_{k=0}^{n}(w_{2k}-w_{2k+1})^2.$
\item Dropping the terms with $k\le 2K$ in the last sum on the RHS and lower bounded $k\ge 2K+1$ by $2K.$
\end{enumerate}

For the second inequality \eqref{eq:ubdfortildeQ1}, we have dropped the third term (which can be done since $2\eps<b$ by hypothesis on $\eps$). In the final term we use that by  choice, $\frac{bK}{\sqrt{n}}\geq \frac14 \eps\sqrt{m}$. Finally using $\sum_j w_j^2=1$, yields the final equality.

\noindent
\textbf{Bounding the Radon-Nikodym derivative.}  For notational convenience we start by defining the following nice set. 
\begin{align*}
\mathcal A=&\left\{t\in \R_{+}^{2n-1} : \forall w \text{ with } \|w\|=1,\,\, 2\sum_{k=1}^{2n-1}(\sqrt{t_k}-\E[X_k])w_kw_{k+1}-b\sqrt{n}\sum_{k=0}^{n}(w_{2k}- w_{2k+1})^2\right.\\
&\quad\quad\quad\quad\quad\quad\quad\quad \quad\quad\quad\quad\quad\quad\quad\left.-b\sqrt{m}\sum_{k=1}^{n}(w_{2k-1}-w_{2k})^2 x-\frac{b}{\sqrt{n}}\sum_{k=1}^{2n}kw_k^2<-\frac18\eps\sqrt{m}\right\}.
\end{align*}
Through the discussion so far leading to \eqref{eq:ubdfortildeQ1}, we have shown that $$\P\{Y^2\in \mathcal A\}\ge 0.9,$$
where $Y^2:=(Y^2_1,\ldots, Y^2_{2n-1}).$ Similarly we will use $X^2$ to denote $(X^2_1,\ldots, X^2_{2n-1}).$

  Now let $g_k$ denote the density of $X_k^2$ and let $f_k$ denote the density of $Y_k^2$. Of course $f_k=g_k$ except for $k=1,3,\ldots ,2K-1$. Recalling from \eqref{ydef}, for convenience, we list here the exact forms of $f_{2k-1}$ and $g_{2k-1}$, for $t\geq 0$ and $1\leq k\leq K$:
\begin{align*}
f_{2k-1}(t) &= \frac{t^{\frac{1}{2}\beta(m+1-k)-1}\exp\left(-\frac{\beta t}{2(1-\varepsilon)}\right)}{\Gamma\left(\frac{\beta(m+1-k)}{2}\right)(2(1-\varepsilon))^{\frac{1}{2}\beta (m+1-k)}},\\
%
%
g_{2k-1}(t) &= \frac{t^{\frac{1}{2}\beta(m+1-k)-1}\exp\left(-\frac{\beta t}{2}\right)}{\Gamma\left(\frac{\beta(m+1-k)}{2}\right)2^{\frac{1}{2}\beta(m+1-k)}}.
\end{align*}
Next we substitute these expressions to get, 
\begin{align*}
\P\{X^2\in \mathcal A\} &=\int_{\mathcal A} \prod_{k=1}^K\frac{g_{2k-1}(t_{2k-1})}{f_{2k-1}(t_{2k-1})} \;\; \prod_{k=1}^{2n-1}f_{k}(t_k) \ dt_1\ldots dt_{2n-1} \\
&=\int_{\mathcal A}  \exp\l\{\frac{\beta\eps}{2(1-\eps)}\sum_{k=1}^K t_{2k-1}\r\}(1-\eps)^{\frac\beta2\sum_{k=1}^{K} (m+1-k)} \prod_{k=1}^{2n-1}f_{k}(t_k)\; \ dt_1\ldots dt_{2n-1}.
\end{align*}
Now let $\mathcal B=\l\{t\in \R_{+}^{2n-1} : \sum_{k=1}^Kt_{2k-1} > (1-\eps)\sum_{k=1}^K(m+1-k) \r\}$. Since $\E[Y_{2k-1}^2]= (1-\eps)(m+1-k)$, it follows that 
\begin{equation}\label{lowerboundgamma}
\P\{Y^2\in \mathcal B\}\ge 0.4.
\end{equation}
A quick way to see this is to use the normal approximation for Gamma variables. More precisely by \eqref{ydef}: 
For any $K$ as above, $$\sum_{k=1}^KY^2_{2k-1} \sim \mb{Gam}\left(\frac{\sum_{k=1}^K\beta(m+1-k)}{2},\frac{\beta}{2(1-\eps)}\right).$$
Now as $\eps$ is bounded away from $1$ and $\beta$ is a given constant, uniformly for all $K\le \frac{m}{2},$ as $m\to \infty$ since $\sum_{k=1}^K\frac{\beta(m+1-k)}{2}$ goes to infinity,
and $\frac{\beta}{2(1-\eps)}$ stays fixed, by the Central Limit Theorem for gamma variables with increasing shape parameters and a given scale parameter,\footnote{Since $\mb{Gam}(\alpha,\kappa)=\mb{Gam}(\lfloor{\alpha}\rfloor,\kappa)*\mb{Gam}(\alpha-\lfloor{\alpha}\rfloor,\kappa),$ and the first term is a sum of $\lfloor{\alpha}\rfloor$ many i.i.d. $\mb{Gam}(1,\kappa)$ and $\mb{Gam}(\alpha-\lfloor{\alpha}\rfloor,\kappa)$ is a tight random variable.} 
we have, 
$$\P\left[\mb{Gam}\left(\frac{\sum_{k=1}^K\beta(m+1-k)}{2},\frac{\beta}{2(1-\eps)}\right)\ge \E\left(\mb{Gam}\left(\frac{\sum_{k=1}^K\beta(m+1-k)}{2},\frac{\beta}{2(1-\eps)}\right)\right)\right]\to \frac{1}{2},$$   and hence in particular (using $\beta \geq 1$) for any large enough {$m\ge m_0$},
\eqref{lowerboundgamma} is satisfied. 
 Thus,
\begin{align}
\P\{X^2\in \mathcal A\} &\ge \int_{\mathcal A\cap \mathcal B}  \exp\l\{\frac{\beta\eps}{2(1-\eps)}\sum_{k=1}^K t_{2k-1}\r\}(1-\eps)^{\frac\beta2\sum_{k=1}^K (m+1-k)}\prod_{k=1}^{2n-1}f_{k}(t_k) \; \ dt_1\ldots dt_{2n-1}, \nonumber\\
&\ge  \exp\l\{\frac{\beta\eps}{2(1-\eps)}(1-\eps)\sum_{k=1}^K (m+1-k)\r\}(1-\eps)^{\frac\beta2\sum_{k=1}^K (m+1-k)}\P\{Y^2\in \mathcal A \cap \mathcal B\} \nonumber\\
&\ge \frac{1}{4}\exp\l(\frac{\beta}{2}(\sum_{k=1}^K (m+1-k))(\eps+ \log (1-\eps))\r) \geq  \frac{1}{4}e^{-\frac{1}{2}\eps^2\beta Km}. 
\label{e.final prob bound}
\end{align} 
%
In the last line we used $\P\{Y^2\in \mathcal A\cap \mathcal B\}\ge 0.9+0.4-1>\frac14 $ and $\sum_{k=1}^K(m+1-k)\le Km$ together with $\eps+\log(1-\eps)\in (-\eps^2,0)$ (if $0<\eps<\frac12$). By our choice of $K$ we get that this is further lower bounded by $\frac{1}{4}e^{-c\beta \eps^{3}m^{3/2}n^{1/2}}$.

This is exactly what was asked for in Lemma~\ref{lem:lbdlefttail}, for the case when $\eps\le c''\sqrt{n/m}$ for a small enough $c''$.
\medskip

\noindent \textbf{Case 2:} $\frac{b\sqrt n}{20\sqrt m} < \eps < \frac{b}{2}$.
 In this case, we take $K=n$ and define $Y_k$ as in \eqref{ydef}. Thus all the odd $Y_k$ are different from all the odd $X_k$. Proceeding exactly as before, in the analysis of \eqref{eq:ubdfortildeQ} (the first place where the definition of $K$ was used)  the last term is an empty sum and hence dropped yielding the following instead of \eqref{eq:ubdfortildeQ1}.
\begin{align}\label{eq:ubdfortildeQ2}
\widetilde{Q}_b(w,Y) &\le \frac18\eps\sqrt{m}-\frac14 \eps\sqrt{ m}\sum_{k=1}^{2K}w_{k}^2-\l(\frac{b}{2}-\eps\r)\sqrt{m}\sum_{k=1}^{n}(w_{2k-1}-w_{2k})^2\le -\frac{1}{8}\eps \sqrt{m}.
\end{align}
 
  Continuing, we get the same probability bound \eqref{e.final prob bound} as before, except that $K=n$. Thus we arrive at
\begin{align*}
\P\{X^2\in \mathcal A\}\ge \frac14 e^{-c\eps^2\beta mn} \geq e^{-c\eps^{2}\beta m n}
\end{align*}
by increasing the constant $c$, and using $\beta\geq 1$ together with $\eps^2 m n\geq \frac{b^2n}{400}\geq \frac{b^2}{400}$. This is what the lemma claims.

\noindent \textbf{Finally we address the case: $\varepsilon \leq \varepsilon_0 = C_1m^{-1/2}n^{-1/6}$.} 
%
$$\P\l\{\max_{\|w\|=1}Q_b(w)\le -\eps\sqrt{m}\r\} \geq \P\l\{\max_{\|w\|=1}Q_b(w)\le -\eps_0\sqrt{m}\r\} \geq \frac{1}{4}e^{-c\beta C_1^{3}}  \geq (C_0)^\beta e^{-c\beta\eps^3 m^{3/2}n^{1/2}},$$
for a suitably increased constant $c$ (depending on $b$) where $C_0 = \frac{1}{4}e^{-cC_1^{3}}$.
\end{proof}

\subsection{Deviation bounds in the large deviation regime}
\label{s:gershgorin}

We finish with a discussion of lower bounds of the large deviation probabilities in the left tail: i.e., $\P(\lambda_{n}\leq (\sqrt{m}+\sqrt{n})^2(1-\eps))$ where $c'\leq \eps <1$. Notice that this case is not covered by Theorem \ref{t:main2}. For $\beta=2$ case, Johansson \cite{Jo99} obtained that if $m=\gamma n$, then $$-n^{-2}\log \P(\lambda_{n}\leq (\sqrt{m}+\sqrt{n})^2(1-\eps)) \to J_{\gamma}(\eps)$$ for a large deviation rate function $J_{\gamma}(\cdot)$. We shall briefly describe below how to show a corresponding lower bound for finite $n$ in this regime with a rather simple argument (the upper bound was covered in \cite{LR09}). 

While for this discussion we shall restrict ourselves to the case $m=n$ sufficiently large, and also to $\beta=1$, one can use the same argument for all $\beta\geq 1$ and $m,n$ with $\frac{m}{n}$ is bounded away from infinity. 

Recall the $2n\times 2n$ tridiagonal matrix $T$ with largest eigenvalue $s_{n}$ where $s_n^2=\lambda_n.$ We use the following well known and easy to prove bound (Gershgorin theorem) 
$$s_{n} \leq \max_{0\leq i \leq 2n-1} (X_{i}+X_{i+1})$$
where $X_{i}$s are the independent $\chi$ variables defined in \eqref{defT}  with the convention that $X_0=X_{2n}=0$. It now follows easily that 
$$\P(s_{n}\leq 2\sqrt{n}(1-\eps))\geq \prod_{i=1}^{2n-1} \P(X_{i}\leq (1-\eps)\sqrt{n}).$$
Now each term in the product can be lower bounded by a constant  (say, $\frac{1}{4}$) if $\E X_{i}^2 \leq (1-\eps)^2 n$. Using the left tail of $\chi^2$ distribution one can lower bound each of the other terms by $e^{-c(\eps)n}$, leading to a lower bound of the form $e^{-c'(\eps)n^2}$ for $n$ sufficiently large. A slightly more careful version of the above calculation yields $c'(\eps)\approx \eps ^3$ if $\eps$ is sufficiently small (but still bounded away from $0$) thus matching the lower bound in Theorem \ref{t:main2}. 

Note however, that this approach cannot be carried over to get optimal tails bounds all the way up to the moderate deviation tails, i.e., $\eps \approx n^{-2/3}$. In this case, observe that at least the first $\Theta(n^{1/3})$ many of the terms in the product $\prod_{i=1}^{2n-1} \P(X_{i}\leq (1-\eps)\sqrt{n})$ are bounded away from $1$. Hence this approach gives us a lower bound that decays to $0$ at least as fast as $e^{-cn^{1/3}}$, much worse than the constant order lower bound obtained in Theorem \ref{t:main2}.

\bibliography{LIL}
\bibliographystyle{plain}

\end{document}